\documentclass[11pt]{amsart}

\usepackage[margin=1.25in]{geometry} 
\usepackage{amsmath,amsthm,amssymb,bbm,bm, breqn}
\usepackage{graphicx, xcolor}
\usepackage{hyperref}
\usepackage{amssymb,amsfonts}
\usepackage[all,arc]{xy}
\usepackage{enumerate}
\usepackage{mathrsfs}
\usepackage{xcolor}
\usepackage{tikz}
\usepackage{enumerate, enumitem}

\newcommand{\N}{\mathbb{N}}

\newcommand{\Fc}{\mathcal{F}}

\newcommand{\I}{\mathbbm{1}}

\newcommand{\Poi}{\mathrm{Poisson}}
\newcommand{\fp}{\mathrm{fp}}
\newcommand{\tc}{\mathrm{tc}}
\newcommand{\PF}{\mathrm{PF}}
\newcommand{\PC}{\mathrm{PC}}

\newcommand{\Address}{{
\bigskip
\footnotesize

\textsc{Department of Mathematics, University of Southern California, Los Angeles, CA}\par\nopagebreak
\textit{E-mail address}: \texttt{paguyo@usc.edu}
}}

\def\bal#1\eal{\begin{align*}#1\end{align*}}

\newtheorem{theorem}{Theorem}[section]
\newtheorem{lemma}[theorem]{Lemma}
\newtheorem{proposition}[theorem]{Proposition}

\bibliographystyle{plain}

\title[Cycle structure of random parking functions]{Cycle structure of random parking functions}
\author{J. E. Paguyo}

\date{\today}

\subjclass[2020]{60C05, 60F05}
\keywords{parking functions, cycles, Stein's method, exchangeable pairs, mappings, permutations}

\begin{document}


\begin{abstract} 
We initiate the study of the cycle structure of uniformly random parking functions. Using the combinatorics of parking completions, we compute the asymptotic expected value of the number of cycles of any fixed length. 
We obtain an upper bound on the total variation distance between the joint distribution of cycle counts and independent Poisson random variables using a multivariate version of Stein's method via exchangeable pairs. 
Under a mild condition, the process of cycle counts converges in distribution to a process of independent Poisson random variables. 
\end{abstract}

\maketitle



\section{Introduction} \label{Introduction}

Consider $n$ parking spots placed sequentially on a one-way street. A line of $n$ cars enter the street one at a time, with each car having a preferred parking spot. The $i$th car drives to its preferred spot, $\pi_i$, and parks if the spot is available. 
If the spot is already occupied, the car parks in the first available spot after $\pi_i$. If the car is unable to find any available spots, the car exits the street without parking. 
A sequence of preferences $\pi = (\pi_1, \ldots, \pi_n)$ is a {\em parking function} if all $n$ cars are able to park. 

More precisely, let $[n] := \{1, \ldots, n\}$. A sequence $\pi = (\pi_1,\ldots, \pi_n) \in [n]^n$ is a parking function of size $n$ if and only if $|\{k : \pi_k \leq i\}| \geq i$ for all $i \in [n]$. 
This follows from the pigeonhole principle, since a parking function must have at least one coordinate with value equal to $1$, at least two coordinates with value at most $2$, and so on. 
Equivalently, $\pi = (\pi_1,\ldots, \pi_n) \in [n]^n$ is a parking function if and only if $\pi_{(i)} \leq i$ for all $i \in [n]$, where $(\pi_{(1)},\ldots, \pi_{(n)})$ is $\pi$ sorted in a weakly increasing order $\pi_{(1)} \leq \dotsb \leq \pi_{(n)}$. 

Let $\PF_n$ denote the set of parking functions of size $n$. The total number of parking functions of size $n$ is 
\bal
|\PF_n| = (n+1)^{n-1}.
\eal
An elegant proof using a circle argument is due to Pollak and can be found in \cite{FR74}. 

Parking functions were introduced by Konheim and Weiss \cite{KW66} in their study of the hash storage structure. It has since found many applications to combinatorics, probability, and computer science. 
Observe that the expression for $|\PF_n|$ is reminiscent of Cayley's formula for labeled trees. Indeed, Foata and Riordan \cite{FR74} established a bijection between parking functions and trees on $n+1$ labeled vertices. 
Connections to other combinatorial objects have since been established. For example, Stanley found connections to noncrossing set partitions \cite{Sta97} and hyperplane arrangements \cite{Sta98}, 
and Pitman and Stanley \cite{PS02} found a relation to volume polynomials of certain polytopes. 
The literature on the combinatorics of parking functions is vast and we refer the reader to Yan \cite{Yan15} for an accessible survey.  

There are several generalizations of the classical parking functions. 
In \cite{Yan01}, Yan considered ${\bm x}$-parking functions, where parking functions are associated with a vector ${\bm x}$, and constructed a bijection with rooted forests. 
Kung and Yan then introduced $[a,b]$-parking functions in \cite{KY03I} and ${\bm u}$-parking functions in \cite{KY03II}, and for both generalizations, they gave explicit formulas for moments of sums of these parking functions. 
In \cite{PS04}, Postnikov and Shapiro introduced $G$-parking functions, where $G$ is a digraph on $[n]$; the classical parking functions correspond to the case where $G = K_{n+1}$. 
Gorsky, Mazin, and Vazirani \cite{GMV16} studied rational parking functions and found connections to affine permutations and representation theory. 
Returning to the classical notion of parking on a one-way street, Ehrenborg and Happ \cite{EH16} studied parking in which cars have different sizes, and in \cite{EH18} they extended this to study parking cars behind a trailer of fixed size.  

Much of the recent combinatorial work on parking functions concerns parking completions. Suppose that $\ell$ of the $n$ spots are already occupied, denoted by $\bm{v} = (v_1,\ldots, v_\ell)$, where the entries are in increasing order, 
and that we want to find parking preferences for the remaining $n-
\ell$ cars so that they can all successfully park. The set of successful preference sequences are the {\em parking completions} for the sequence $\bm{v} = (v_1,\ldots, v_\ell)$. 
Gessel and Seo \cite{GS06} obtained a formula for parking completions where the occupied spots consist of a contiguous block starting from the first spot, $v = (1,\ldots, \ell)$. 
Diaconis and Hicks \cite{DH17} introduced the parking function shuffle to count parking completions with one spot arbitrarily occupied. 
Extending this result, Aderinan et al \cite{ABD+20} used join and split operations to count parking completions where $\ell \leq n$ occupied spots are arbitrarily occupied.  

Probabilistic questions about parking functions have also been considered, but probabilistic problems tend to be more complicated than enumeration results. 
In their study of the width of rooted labeled trees, Chassaing and Marckert \cite{CM01} discovered connections between parking functions, empirical processes, and the Brownian bridge. 
The asymptotic distribution for the cost construction for hash tables with linear probing (which is equivalent to the area statistic of parking functions) was studied by Flajolet, Poblete, and Viola \cite{FPV98} and Janson \cite{Jan01}, 
where it was shown to converge to normal, Poisson, and Airy distributions depending on the ratio between the number of cars and spots. 
 
More recently, Diaconis and Hicks \cite{DH17} studied the distribution of coordinates, descent pattern, area, and other statistics of random parking functions. 
Yao and Zeilberger \cite{YY19} used an experimental mathematics approach combined with some probability to study the area statistic. 
In \cite{KY21}, Kenyon and Yin explored links between combinatorial and probabilistic aspects of parking functions. 
Extending previous work, Yin developed the parking function multi-shuffle to obtain formulas for parking completions, moments of multiple coordinates, and all possible covariances between two coordinates for $(m,n)$-parking functions 
(where there are $m \leq n$ cars and $n$ spots) \cite{Yin21I} and $\bm{u}$-parking functions \cite{Yin21II}. 

In this paper, we continue the probabilistic study of parking functions by establishing the asymptotic distribution of the cycle counts of parking functions, partially answering a question posed by Diaconis and Hicks in \cite{DH17}.

\subsection{Main Results}

Let $C_k(\pi)$ be the number of $k$-cycles in the parking function $\pi \in \PF_n$. Our first result shows that the expected number of $k$-cycles in a random parking function is asymptotically $\frac{1}{k}$. 

\begin{theorem} \label{parkingcyclesmean}
Let $\pi \in \PF_n$ be a parking function chosen uniformly at random. Then 
\bal
E(C_k(\pi)) \sim \frac{1}{k}.
\eal
\end{theorem}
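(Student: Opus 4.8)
The plan is to regard a parking function $\pi=(\pi_1,\dots,\pi_n)$ as a map $[n]\to[n]$, $i\mapsto\pi_i$, and to count the cycles of its functional digraph. Since each unordered $k$-cycle arises from exactly $k$ ordered tuples, one has the indicator identity $C_k(\pi)=\tfrac1k\sum \I[\pi_{c_1}=c_2,\dots,\pi_{c_k}=c_1]$, the sum being over ordered tuples $(c_1,\dots,c_k)$ of distinct elements of $[n]$. Taking expectations,
\[
E(C_k(\pi))=\frac1k\sum_{(c_1,\dots,c_k)}\Pb(\pi_{c_1}=c_2,\dots,\pi_{c_k}=c_1).
\]
The whole argument rests on one symmetry: the uniform distribution on $\PF_n$ is exchangeable, because the defining condition $\pi_{(i)}\le i$ depends only on the multiset of values, so permuting coordinates sends parking functions to parking functions.

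The key step is a rigidity lemma. Fix a $k$-subset $S\subseteq[n]$ and a bijection $\sigma\colon S\to S$. I claim the number of $\pi\in\PF_n$ with $\pi_p=\sigma(p)$ for all $p\in S$ depends only on $S$, not on $\sigma$. Indeed, whatever $\sigma$ is, $|\{p\in S:\pi_p\le i\}|=|S\cap[i]|$ for every $i$, so the parking constraint on the remaining coordinates, namely $|\{p\notin S:\pi_p\le i\}|\ge i-|S\cap[i]|$ for all $i$, is the same for every $\sigma$; this is precisely a parking completion count for the occupied set $S$. Taking $\sigma$ to be a $k$-cycle and then the identity, it follows that the probability that $S$ carries a prescribed $k$-cycle equals the probability that every point of $S$ is a fixed point of $\pi$. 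Substituting this into the display and invoking exchangeability once more — relabelling the positions $c_1,\dots,c_k$ as $1,\dots,k$ gives $\Pb(\pi_{c_i}=c_i\ \forall i)=\Pb(\pi_1=c_1,\dots,\pi_k=c_k)$ — the sum over distinct tuples collapses to
\[
E(C_k(\pi))=\frac1k\,\Pb(\pi_1,\dots,\pi_k\text{ are pairwise distinct}).
\]
Equivalently $E(C_k)=\tfrac1k\,E[(C_1)_k]$, the $k$-th falling-factorial moment of the number of fixed points, which already hints at the Poisson$(1)$ limit developed later in the paper.

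It then remains to prove that $\Pb(\pi_1,\dots,\pi_k\text{ pairwise distinct})\to1$ for each fixed $k$. By a union bound together with exchangeability, the complement is at most $\binom{k}{2}\,\Pb(\pi_1=\pi_2)$, so everything reduces to the single estimate $\Pb(\pi_1=\pi_2)\to0$. I would obtain this by writing $\Pb(\pi_1=\pi_2)=\sum_{v=1}^n\Pb(\pi_1=v,\pi_2=v)$ and evaluating each term through the combinatorics of parking completions: fixing two coordinates to the common value $v$ leaves a completion-type enumeration supplied by the parking function shuffle of Diaconis and Hicks \cite{DH17} and the multi-shuffle formulas of Yin \cite{Yin21I}. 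Small cases are encouraging: one computes $\Pb(\pi_1=\pi_2)=\tfrac13,\tfrac14$ for $n=2,3$, pointing to the clean value $\tfrac1{n+1}$, while each individual term is tiny, with the worst case $\Pb(\pi_1=\pi_2=1)=\tfrac3{(n+1)^2}$ attained at the smallest spot. The crude bound $\Pb(\pi_1=\pi_2)\le n\max_v\Pb(\pi_1=\pi_2=v)$ already gives $O(n^{-1})$ once the uniform $O(n^{-2})$ control on the terms is in place.

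The main obstacle is exactly this collision estimate: one must enumerate the parking completions in which two cars compete for a common spot and show $\max_v\Pb(\pi_1=\pi_2=v)=O(n^{-2})$ uniformly, including the delicate small-$v$ terms where two cars fight over an early spot. This is where the completion combinatorics does the real work. Once $\Pb(\pi_1=\pi_2)=o(1)$ is established, $E(C_k(\pi))=\tfrac1k\,\Pb(\pi_1,\dots,\pi_k\text{ distinct})\to\tfrac1k$ follows at once, for every fixed $k$.
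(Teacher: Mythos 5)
Your route is genuinely different from the paper's, and the core of it is correct. The rigidity lemma is valid: since the parking criterion $|\{p:\pi_p\le i\}|\ge i$ sees only the multiset of values, prescribing on a $k$-set $S$ of coordinates the values $\sigma(p)$, $p\in S$, for a bijection $\sigma\colon S\to S$ leaves a completion count that depends only on $S$ and not on $\sigma$; combined with exchangeability this gives the exact identity
\[
E(C_k(\pi))=\frac{1}{k}\,\Pb(\pi_1,\ldots,\pi_k \text{ pairwise distinct}),
\]
which indeed recovers the paper's exact values $E(\fp(\pi))=1$ and $E(\tc(\pi))=\frac{n}{2(n+1)}$ (Proposition \ref{parkingfptcmean}) as the cases $k=1,2$. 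By contrast, the paper proves Theorem \ref{parkingcyclesmean} by expanding $\sum_{i_1<\cdots<i_k}|\PC_n((i_1,\ldots,i_k))|$ via Lemma \ref{parkingcompletion}, estimating the $k$-fold inner index sum by $\frac{n^k}{k!}(1+o(1))$, and applying Abel's multinomial theorem. Your identity is exact and pushes all of the asymptotics into a single two-coordinate collision estimate, which is a cleaner division of labor and avoids the delicate uniformity issues in the nested-sum manipulation.

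The gap is that this collision estimate, $\Pb(\pi_1=\pi_2)\to0$, is flagged as ``the main obstacle'' and then assumed rather than proven, so as written the argument is incomplete precisely where it needs the parking-function structure (for a general exchangeable sequence nothing forces small collision probability). Fortunately the estimate closes in a few lines with the paper's own tools. If $\pi_1=\pi_2=v$ then car $1$ parks at $v$ and car $2$ at $v+1$ (impossible for $v=n$), so $|\{\pi\in\PF_n:\pi_1=\pi_2=v\}|=|\PC_n((v,v+1))|$ for $v\le n-1$. By the monotonicity observation in the proof of Lemma \ref{cycleprobUB}, $|\PC_n((v,v+1))|\le|\PC_n((1,2))|$, and by Lemma \ref{parkingcompletionblock} together with Abel's multinomial theorem (Lemma \ref{Abelmultinomial}), $|\PC_n((1,2))|=2A_{n-2}(1,2;-1,-1)=3(n+1)^{n-3}$. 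Hence
\[
\Pb(\pi_1=\pi_2)\le\frac{3(n-1)(n+1)^{n-3}}{(n+1)^{n-1}}\le\frac{3}{n+1},
\]
confirming your conjectured rate; summing $|\PC_n((v,v+1))|$ over $v$ with one more application of Abel even gives your guessed exact value $\frac{1}{n+1}$. With this supplied, your union bound yields $\Pb(\pi_1,\ldots,\pi_k\text{ distinct})\ge1-\binom{k}{2}\frac{3}{n+1}\to1$ for each fixed $k$, and the theorem follows.
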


Let $\mu$ and $\nu$ be probability distributions. The {\em total variation distance} between $\mu$ and $\nu$ is
\bal
d_{TV}(\mu,\nu) := \sup_{A \subseteq \Omega} |\mu(A) - \nu(A)|,
\eal
where $\Omega$ is a measurable space. If $X$ and $Y$ are random variables with distributions $\mu$ and $\nu$, respectively, then we write $d_{TV}(X,Y)$ in place of $d_{TV}(\mu, \nu)$. 

Our main result gives an upper bound on the total variation distance between the joint distribution of cycle counts $(C_1, \ldots, C_d)$ of a random parking function and a Poisson process $(Z_1,\ldots, Z_d)$, 
where $\{Z_k\}$ are independent Poisson random variables with rate $\lambda_k = E(C_k)$. 

\begin{theorem}\label{cyclesPoissonlimit}
Let $\pi \in \PF_n$ be a parking function chosen uniformly at random. Let $C_k = C_k(\pi)$ be the number of $k$-cycles in $\pi$ and let $W = (C_1, C_2, \ldots, C_d)$. 
Let $Z = (Z_1, Z_2, \ldots, Z_d)$, where $\{Z_k\}$ are independent Poisson random variables with rate $\lambda_k = E(C_k)$. 
Then
\bal
d_{TV}(W, Z) = O\left( \frac{d^4}{n-d} \right).
\eal

Moreover if $d = o(n^{1/4})$, then the process of cycle counts converges in distribution to a process of independent Poisson random variables
\bal
(C_1, C_2, \ldots) \xrightarrow{D} (Y_1, Y_2, \ldots) 
\eal 
as $n \to \infty$, where $\{Y_k\}$ are independent Poisson random variables with rate $\frac{1}{k}$. 
\end{theorem}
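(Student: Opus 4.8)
The plan is to apply a multivariate Poisson approximation theorem obtained through Stein's method of exchangeable pairs. Setting $\lambda_k = 1/k$ and writing $e_k$ for the $k$th standard basis vector in $\R^d$, the Stein operator characterizing a vector of independent Poisson variables is
\[
(\mathcal{A}f)(w) = \sum_{k=1}^{d} \lambda_k\big(f(w+e_k)-f(w)\big) + \sum_{k=1}^{d} w_k\big(f(w-e_k)-f(w)\big).
\]
For each bounded test function $h$ there is a solution $f_h$ of $\mathcal{A}f_h = h - E[h(Y)]$ whose first and second differences are controlled by the Poisson Stein factors, and since
\[
d_{TV}(W,Y) = \sup_{h}\big| E[h(W)] - E[h(Y)]\big| = \sup_{h}\big| E[(\mathcal{A}f_h)(W)]\big| ,
\]
it is enough to bound $E[(\mathcal{A}f_h)(W)]$ uniformly in $h$. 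The exchangeable pair is the mechanism for rewriting this expectation as a controllable remainder.

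To build the pair, I would use the fact that $\PF_n$ is invariant under permutations of the coordinates: if $\pi \in \PF_n$ and $\tau \in S_n$, then $\pi\circ\tau \in \PF_n$, since the parking condition depends only on the multiset of values. Let $(I,J)$ be a uniformly random $2$-subset of $[n]$, independent of $\pi$, and set $\pi' = \pi\circ(I\,J)$. Then $\pi \mapsto \pi\circ(I\,J)$ is an involution of $\PF_n$, so $\pi'$ is again uniform and $(\pi,\pi')$ is exchangeable; put $W' = (C_1(\pi'),\ldots,C_d(\pi'))$. In the functional digraph of $\pi$, forming $\pi'$ swaps the out-edges of $I$ and $J$. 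When both $I$ and $J$ are cyclic vertices this acts exactly as right multiplication by a transposition acts on a permutation---splitting one cycle when $I,J$ lie on a common cycle and merging two cycles otherwise---while if at least one of $I,J$ is transient the cyclic part is altered only through lower-order events. Consequently $W'-W$ is supported on at most three coordinates and, to leading order, is governed by the split/merge dynamics on the recurrent part of $\pi$.

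Next I would compute the conditional transition probabilities $P(W' = W + e_k - e_\ell \mid \pi)$ and their analogues, and verify the approximate linear regression condition required by the theorem, namely
\[
E[W'_k - W_k \mid \pi] = \frac{k}{\binom{n}{2}}\big(\lambda_k - C_k\big) + \varepsilon_k ,
\]
so that the relaxation matrix is the diagonal matrix $\Lambda = \mathrm{diag}\big(k/\binom{n}{2}\big)$: a $k$-cycle is destroyed with conditional rate $\approx \tfrac{k}{\binom{n}{2}}C_k$ and a $k$-cycle is created with conditional rate $\approx \tfrac{k}{\binom{n}{2}}\lambda_k = \tfrac{1}{\binom{n}{2}}$, matching the birth--death structure of $\mathcal{A}$. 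Here the parking constraint does not restrict the move---$\pi'$ is automatically a parking function---but it does shape the law of the functional digraph of a uniform $\pi\in\PF_n$, which differs from that of a uniform mapping; evaluating these rates and their fluctuations therefore requires the parking completion enumeration together with the mean computation of Theorem \ref{parkingcyclesmean}.

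Finally I would substitute the rates, the errors $\varepsilon_k$, and the second-order (conditional variance) terms into the abstract bound. The Poisson Stein factors are bounded, and summing the coordinatewise contributions over $k \le d$ produces the stated estimate $d_{TV}(W,Y) = O\big(d^5/(n-d)\big)$, the power $d^5$ arising from the $k$-dependence of $\Lambda$ and of the split/merge counts accumulated across the $d$ coordinates, with per-coordinate error of order $k^4/n$ summing to $\sum_{k\le d}k^4 = O(d^5)$. I expect the principal obstacle to be exactly this combinatorial step: controlling how transient (tree) vertices and the non-uniformity imposed by the parking constraint perturb the pure permutation split/merge rates, and showing the per-coordinate error is only $O(k^4/n)$. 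Once the bound is in hand, the hypothesis $d = o(n^{1/5})$ forces it to $0$; combining this with the finite-dimensional Poisson limits and a standard truncation argument over growing $d$ then upgrades convergence to the process level, giving $(C_1,C_2,\ldots) \xrightarrow{D} (Y_1,Y_2,\ldots)$.
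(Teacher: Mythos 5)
Your architecture coincides with the paper's: the same exchangeable pair (a uniform random transposition applied to the coordinates, which preserves $\PF_n$ because the parking property depends only on the multiset of values), the same multivariate Stein framework of Chatterjee--Diaconis--Meckes, and the same split/merge case analysis on the functional digraph, with tree vertices as the complicating feature. The problem is that what you have written is a plan whose decisive step is left open. The paper's proof lives or dies on a quantitative estimate you never supply: a uniform upper bound on the cycle- and path-length distribution of a uniform parking function, namely $P(L(C_a)=k)\leq \frac{k+1}{n+1}$, and the same bound for $P(L(P_a)=k)$ (Lemma \ref{cycleprobUB}). This is proved by observing that over all choices of positions $i_1<\dotsb<i_k$ the completion count $|\PC_n((i_1,\ldots,i_k))|$ is maximized at the contiguous block $(1,\ldots,k)$, and then evaluating $|\PC_n((1,\ldots,k))|$ exactly via Lemma \ref{parkingcompletionblock} and Abel's multinomial theorem. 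Every error term in the Stein bound (Lemmas \ref{SteinAkUB} and \ref{SteinBkUB}) --- probabilities of events such as $\{L(C_a)+L(C_b)=k\}$, $\{L(P_b)\le k\}$, $\{d<L(C_a)<2k\}$ --- is controlled by this single inequality. The asymptotic mean from Theorem \ref{parkingcyclesmean}, which is what you propose to feed into the rate computation, cannot play this role: it gives $E(C_k)\sim \frac{1}{k}$ with no uniform error control, whereas the Stein argument needs pointwise probability bounds valid for all lengths up to $d+k$ simultaneously. So the ``principal obstacle'' you flag is not an implementation detail to be filled in later; it is the theorem.

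There is also a concrete computational error: your regression rates are off by a factor of order $n$. You claim a $k$-cycle is destroyed at conditional rate $\approx \frac{k}{\binom{n}{2}}C_k$ and created at rate $\approx \frac{1}{\binom{n}{2}}$. In fact, swapping the out-edge of any single vertex $a$ lying on a $k$-cycle with the out-edge of any other vertex $b$ already destroys that cycle (this is the opening observation in the proof of Lemma \ref{SteinBkUB}), so the destruction probability is of order $kC_k\, n/\binom{n}{2}\asymp kC_k/n$, not $kC_k/n^2$; likewise creation occurs with probability of order $1/n$. This is precisely why the paper takes $c_k\asymp n/k$ in Theorem \ref{multivariateStein}, so that $c_kP(A_k)\approx\lambda_k$ and $c_kP(B_k)\approx W_k$. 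The mistake does not damage the overall architecture --- your matrix $\Lambda$ just needs rescaling --- but it shows the transition probabilities, whose careful estimation is the substance of Lemmas \ref{SteinAkUB} and \ref{SteinBkUB}, were not actually computed.
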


The proof uses a multivariate Stein's method with exchangeable pairs. Stein's method via exchangeable pairs has previously been used to prove limit theorems in a wide range of settings. 
We refer the reader to \cite{CDM05} for an accessible survey. 
In particular, Judkovich \cite{Jud19} used this approach to prove a Poisson limit theorem for the joint distribution of cycle counts in uniformly random permutations without long cycles. 

We remark that our limit theorem parallels the result of Arratia and Tavare \cite{AT92} on the cycle structure of uniformly random permutations. 
There is a vast probabilistic literature on the cycle structure of random permutations, which include the works of Shepp and Lloyd \cite{SL66} on ordered cycle lengths 
and DeLaurentis and Pittel \cite{DP85} on a functional central limit theorem for cycle lengths with connections to Brownian motion. 
Our paper initiates a parallel study of the cycle structure in random parking functions, but further study is fully warranted. 

\subsection{Outline}

The paper is organized as follows. Section \ref{Preliminaries} introduces definitions and notation that we use throughout the paper, and gives the necessary background and relevant results that we use to prove our main theorems. 

In Section \ref{Expectedcyclecount}, we compute the exact values for the expected number of fixed points and transpositions in a random parking function, and obtain the asymptotic formula, 
Theorem \ref{parkingcyclesmean}, for the expected number of $k$-cycles, for general $k$. 

We then apply Stein's method via exchangeable pairs in Section \ref{CycleLimitTheorem} to establish the Poisson limit theorem for joint cycle counts, Theorem \ref{cyclesPoissonlimit}. 

We conclude with some final remarks and open problems in Section \ref{FinalRemarks}. 


\section{Preliminaries} \label{Preliminaries}

In this section we introduce the definitions and notation that we use throughout the paper, and give necessary background, tools, and techniques that we use to prove our main results.  

\subsection{Definitions and Notation}

Let $X$ and $Y$ be two random variables. We let $X \overset{d}{=} Y$ denote that $X$ and $Y$ are equal in distribution, and we let $X \xrightarrow{D} Y$ denote that $X$ converges in distribution to $Y$. 

Let $a_n, b_n$ be two sequences. 
If $\lim_{n \to \infty} \frac{a_n}{b_n}$ is a nonzero constant, then $a_n \asymp b_n$ and we say that $a_n$ is of the same {\em order} as $b_n$. 
If $\lim_{n \to \infty} \frac{a_n}{b_n} = 1$, then $a_n \sim b_n$ and we say that $a_n$ is {\em asymptotic} to $b_n$. 
If there exists positive constants $c$ and $n_0$ such that $a_n \leq cb_n$ for all $n \geq n_0$, then $a_n = O(b_n)$. 
If $\lim_{n \to \infty} \frac{a_n}{b_n} = 0$, then $a_n = o(b_n)$. 

\subsection{Abel's Multinomial Theorem}

The classical binomial and multinomial theorems are generalized by {\em Abel's multinomial theorem}. We use the following version of Abel's multinomial theorem due to Yin, which was derived from Pitman \cite{Pit02}.

\begin{lemma}[\cite{Yin21I}, Theorem 3.10, Abel's Multinomial Theorem] \label{Abelmultinomial}
Let 
\bal
A_n(x_1, \ldots, x_m; p_1, \ldots, p_m) := \sum \binom{n}{\bm{s}} \prod_{j=1}^m (s_j + x_j)^{s_j + p_j},
\eal
where $\bm{s} = (s_1,\ldots, s_m)$ and $\sum_{i=1}^m s_i = n$. Then 
\bal
&A_n(x_1,\ldots, x_i, \ldots, x_j, \ldots, x_m ; p_1,\ldots, p_i, \ldots, p_j, \ldots, p_m) \\ &\qquad = A_n(x_1,\ldots, x_j, \ldots, x_i, \ldots, x_m ; p_1,\ldots, p_j, \ldots, p_i, \ldots, p_m), \\
&A_n(x_1,\ldots, x_m; p_1,\ldots, p_m) \\ &\qquad = \sum_{i=1}^m A_{n-1}(x_1,\ldots, x_{i-1}, x_{i+1}, \ldots, x_m; p_1,\ldots, p_{i-1}, p_{i+1}, \ldots, p_m), \\
&A_n(x_1,\ldots, x_m; p_1,\ldots, p_m) \\ &\qquad  = \sum_{s=0}^n \binom{n}{s} s! (x_1 + s) A_{n-s}(x_1 + s, x_2, \ldots, x_m; p_1 - 1, p_2, \ldots, p_m). 
\eal
and the following special cases hold via the above recurrences: 
\bal
A_n(x_1, \ldots, x_m; -1, \ldots, -1) &= \frac{(x_1 + \dotsb + x_m)(x_1 + \dotsb + x_m + n)^{n-1}}{(x_1x_2\dotsb x_m)}, \\
A_n(x_1, \ldots, x_m; -1, \ldots, -1, 0) &= \frac{x_m(x_1 + \dotsb + x_m + n)^n}{(x_1x_2\dotsb x_m)}.
\eal
\end{lemma}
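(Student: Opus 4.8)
The unifying device is to read $A_n(\bm x;\bm p)$ as a coefficient in a product of exponential generating functions, and then to exploit the tree function $T=T(t)=\sum_{k\ge1}\frac{k^{k-1}}{k!}t^{k}$, which satisfies $T=te^{T}$. Writing
\[
E_{x,p}(t):=\sum_{s\ge0}\frac{(s+x)^{s+p}}{s!}\,t^{s},
\]
the product rule for exponential generating functions gives
\[
A_n(x_1,\dots,x_m;p_1,\dots,p_m)=n!\,[t^{n}]\prod_{j=1}^{m}E_{x_j,p_j}(t).
\]
The symmetry assertion is then immediate, since permuting the pairs $(x_i,p_i)$ only reorders the factors of a commutative product (equivalently, it reindexes the sum over $\bm s$).

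Next I would prove the two recurrences by elementary operations on this product. Differentiating term by term and shifting the index gives $\frac{d}{dt}E_{x,p}=E_{x+1,p+1}$; applying $\frac{d}{dt}$ to $\prod_j E_{x_j,p_j}$ via the Leibniz rule, extracting $[t^{n-1}]$, and using $[t^{n-1}]F'=n[t^{n}]F$ produces the recurrence that writes $A_n$ as a sum over $i$ of copies of $A_{n-1}$ with the parameters in the $i$th slot advanced by one. For the recurrence that peels off the first argument, I would compare coefficients against $G(t):=\prod_{j\ge2}E_{x_j,p_j}(t)$. The point is that in $A_{n-s}(x_1+s,x_2,\dots;p_1-1,\dots)$ the shift $x_1\mapsto x_1+s$ exactly cancels the loss of degree, so that the base $(a-s)+(x_1+s)=a+x_1$ becomes independent of $s$; the claim then collapses to the finite telescoping identity
\[
\sum_{u=0}^{a}(b-u)\frac{b^{u}}{u!}=\frac{b^{a+1}}{a!},
\]
which follows at once from $(b-u)\frac{b^{u}}{u!}=\frac{b^{u+1}}{u!}-\frac{b^{u}}{(u-1)!}$ and telescoping.

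Finally I would derive the two special cases. The decisive inputs are the closed forms
\[
E_{x,-1}(t)=\frac{e^{xT(t)}}{x},\qquad E_{x,0}(t)=\frac{e^{xT(t)}}{1-T(t)},
\]
each of which I would verify by Lagrange--B\"urmann inversion against $T=te^{T}$ (they are the classical Abel--Cayley identities $e^{xT}=\sum_{s}\frac{x(x+s)^{s-1}}{s!}t^{s}$ and its derivative). Setting $X=x_1+\cdots+x_m$, the product collapses because $\prod_j e^{x_jT}=e^{XT}$. For $\bm p=(-1,\dots,-1)$ this yields $A_n=\frac{n!}{x_1\cdots x_m}[t^{n}]e^{XT}=\frac{X(X+n)^{n-1}}{x_1\cdots x_m}$ using $e^{XT}=X\,E_{X,-1}$, and for $\bm p=(-1,\dots,-1,0)$ the single surviving factor $\frac{1}{1-T}$ gives $A_n=\frac{n!}{x_1\cdots x_{m-1}}[t^{n}]\frac{e^{XT}}{1-T}=\frac{x_m(X+n)^{n}}{x_1\cdots x_m}$ using $\frac{e^{XT}}{1-T}=E_{X,0}$. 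Matching the phrasing of the lemma, these identities can instead be extracted from the recurrences by induction on $m$.

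The hard part is not the recurrences, which are elementary once the generating-function picture is fixed, but the closed-form evaluations $E_{x,-1}=e^{xT}/x$ and $E_{x,0}=e^{xT}/(1-T)$: collapsing the $m$-fold structure to the one-variable Abel identity is where the genuine content lies, and making the Lagrange-inversion step rigorous (or, alternatively, organizing the induction on $m$ cleanly) is the main obstacle. Secondary care is needed for the conventions $0^{0}=1$ in the $s=0$ terms and for the factors with $p=-1$, where $E_{x,-1}$ has a pole at $x=0$, so that all displayed equalities must be read as identities of formal power series in $t$ with coefficients in the field of rational functions in the $x_j$.
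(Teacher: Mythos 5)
First, a point of reference: the paper offers no proof of this lemma at all --- it is quoted as Theorem 3.10 of Yin \cite{Yin21I}, which in turn adapts Pitman \cite{Pit02} --- so there is no internal argument to compare yours against; the relevant comparison is with the cited literature, where these Abel--Hurwitz identities come out of forest volume decompositions. Your route through exponential generating functions is genuinely different and is essentially complete: the identification $A_n=n!\,[t^n]\prod_j E_{x_j,p_j}(t)$ is correct; symmetry is indeed immediate; $\frac{d}{dt}E_{x,p}=E_{x+1,p+1}$ plus the Leibniz rule gives the differentiation recurrence; your reduction of the peel-off recurrence to the telescoping identity $\sum_{u=0}^a (b-u)b^u/u! = b^{a+1}/a!$ with $b=a+x_1$ is exactly right; and the closed forms $E_{x,-1}=e^{xT}/x$ and $E_{x,0}=e^{xT}/(1-T)$, obtained by Lagrange--B\"urmann inversion against $T=te^T$, collapse the product to $e^{XT}$ and yield both special cases. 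The formal-series caveats you raise at the end are the correct ones. What the generating-function approach buys is what you identify: the $m$-variable identities reduce to two one-variable Abel--Cayley expansions; what Pitman's combinatorial approach buys instead is bijective meaning (enumeration of labeled forests), which your argument does not provide.

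One discrepancy must be made explicit. Your Leibniz computation proves
\[
A_n(x_1,\ldots,x_m;p_1,\ldots,p_m)=\sum_{i=1}^m A_{n-1}(x_1,\ldots,x_i+1,\ldots,x_m;\,p_1,\ldots,p_i+1,\ldots,p_m),
\]
in which the $i$th pair is \emph{advanced} and all $m$ slots are retained; the lemma as printed instead \emph{deletes} the $i$th pair, passing to $m-1$ slots. The printed version is false as stated: for $m=2$, $n=1$, $x_1=x_2=1$, $p_1=p_2=0$, the left side equals $4$, while the deletion sum equals $A_0(1;0)+A_0(1;0)=2$; your advanced version gives $A_0(2,1;1,0)+A_0(1,2;0,1)=2+2=4$. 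So the display in the paper is a mis-transcription of Yin's theorem, and what you proved is the correct statement; but since you prove a recurrence different from the one literally displayed, you should state this explicitly rather than pass over it silently --- as written, your proof does not (and could not) establish the second identity of the statement.
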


\subsection{Parking Completions}

Suppose that spots $v_1,\ldots, v_\ell$ are already occupied. 
Recall that the {\em parking completions} for the sequence $\bm{v} = (v_1,\ldots, v_\ell)$ is the set of successful preference sequences for the remaining $n-\ell$ cars. 
The following result due to Adeniran et al is crucial for counting parking completions and computing probabilities on parking functions. 

\begin{lemma}[\cite{ABD+20}, Theorem 1.1] \label{parkingcompletion}
The number of parking completions of $\bm{v} = (v_1,\ldots, v_\ell)$ in $[n]$ is
\bal
|\PC_n(\bm{v})| = \sum_{\bm{s} \in L_n(\bm{v})} \binom{n-\ell}{\bm{s}} \prod_{i=1}^{\ell + 1} (s_i + 1)^{s_i - 1},
\eal
where 
\bal
L_n(\bm{v}) = \left\{ \bm{s} = (s_1,\ldots, s_{\ell+1}) \in \N^{\ell+1} \middle| s_1 + \dotsb + s_i \geq v_i - i \, \, \, \forall i \in [\ell], \sum_{i=1}^{\ell+1} s_i = n-\ell \right\}
\eal
\end{lemma}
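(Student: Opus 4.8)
The plan is to pass from the dynamic parking process to a static, order-independent description and then to count by decomposing along the occupied spots $v_1 < \dots < v_\ell$. First I would prove the \emph{generalized parking condition}: a preference sequence $\bm\pi = (\pi_1, \dots, \pi_{n-\ell})$ for the remaining cars is a parking completion of $\bm v$ if and only if, for every $m \in [n]$,
\bal
|\{k : \pi_k \le m\}| \ge m - |\{j : v_j \le m\}|,
\eal
the right-hand side being the number of empty spots in $\{1, \dots, m\}$. This is the exact analogue of the sorted criterion $\pi_{(i)} \le i$ from the introduction, and it follows from the same greedy/pigeonhole argument, with the occupied spots acting as barriers that overflow may cross only in the forward direction. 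The advantage is that validity no longer depends on the order in which the cars arrive, so $|\PC_n(\bm v)|$ becomes a purely enumerative quantity.

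Next I would count by induction on the number $\ell$ of occupied spots, peeling off one occupied spot at a time. The base case $\ell = 0$ is Pollak's count $|\PF_n| = (n+1)^{n-1}$, which is precisely the single term $\bm s = (n)$ of the claimed sum. For the inductive step I would split a completion at the occupied spot $v_\ell$: writing $s_{\ell+1}$ for the number of cars that come to rest strictly to the right of $v_\ell$ and $n-\ell-s_{\ell+1}$ for those weakly to its left, the two parts are themselves smaller completions, and choosing which cars play which role contributes a binomial factor. Iterating the split across all $\ell$ occupied spots should distribute the $n-\ell$ cars into $\ell+1$ groups of sizes $s_1, \dots, s_{\ell+1}$, producing the multinomial $\binom{n-\ell}{\bm s}$, while each group, once its window of empty spots is isolated, behaves like an unconstrained parking function of its own size and contributes $|\PF_{s_i}| = (s_i+1)^{s_i-1}$. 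The algebraic bookkeeping of the binomial factors and the shifting ranges of summation is exactly what the recurrences of Lemma \ref{Abelmultinomial} are built to carry out, so I would use them to verify that the resulting convolution collapses to the stated sum.

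The summation region $L_n(\bm v)$ records the feasibility of this decomposition: since every empty spot must be filled and the empty spots lying weakly below $v_i$ number $v_i - i$, the cars allocated to the first $i$ groups must be at least this many, giving $s_1 + \dots + s_i \ge v_i - i$, together with the total constraint $\sum_{i=1}^{\ell+1} s_i = n - \ell$. The main obstacle is the split step itself. Because a car may prefer an occupied spot, or may overflow forward across one or several occupied spots before parking, the partition of cars into groups cannot be read off naively from their preferences; indeed one checks in small cases that grouping by preferred region gives the wrong term-by-term counts even when the totals agree. The correct bijection must therefore be phrased in terms of where cars come to rest relative to the occupied spots, and the crux is to show that this forward-overflow coupling genuinely decouples, so that each isolated window is counted by an independent parking-function factor. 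I would make this rigorous either through a cycle-lemma (Pollak circle) argument applied window-by-window, or through the explicit join and split operations of \cite{ABD+20}, matching their effect on the counts to Lemma \ref{Abelmultinomial}.
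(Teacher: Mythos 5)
First, a point of comparison: the paper never proves this lemma. It is quoted verbatim from Adeniran et al.\ \cite{ABD+20} (their Theorem 1.1) and used as a black box, so your attempt cannot be measured against an internal argument; it has to stand on its own.

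On its own terms, the attempt has a genuine gap, and it sits exactly at the step you yourself call the crux. Your first step, the generalized parking criterion (the remaining cars complete $\bm{v}$ if and only if, for every $m$, the number of cars preferring a spot $\leq m$ is at least the number of empty spots in $[1,m]$), is correct and provable by the same pigeonhole/greedy argument as the classical criterion. The failure is in the decomposition. You propose to realize the sum over $L_n(\bm{v})$ by grouping cars according to \emph{where they come to rest} relative to the occupied spots. This cannot work, for a structural reason: in any parking completion the $n-\ell$ cars fill the $n-\ell$ empty spots exactly, so the number of cars coming to rest in any fixed window between occupied spots is the \emph{same for every completion} --- it equals the number of empty spots in that window. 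In particular your $s_{\ell+1}$ is forced to equal $n - v_\ell$ in every completion, so grouping by resting position produces a single composition $\bm{s}$, not a sum over $L_n(\bm{v})$. The smallest case already exhibits the mismatch: for $n=2$, $\bm{v}=(1)$, both completions ($\pi_1 = 1$ and $\pi_1 = 2$) come to rest at spot $2$, yet the formula is a sum of two terms, $\bm{s}=(0,1)$ and $\bm{s}=(1,0)$, each contributing $1$. Relatedly, the $s_i$ in the formula need not equal the window sizes $v_i - v_{i-1} - 1$, so the picture of each isolated window ``behaving like an unconstrained parking function of size $s_i$'' is not coherent as stated. The compositions $\bm{s}$ index a subtler decomposition whose breakpoints depend on the preference sequence itself, not only on $\bm{v}$; this is precisely what the Diaconis--Hicks shuffle and the join/split operations of \cite{ABD+20} are designed to handle. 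Your closing fallback --- that you would make the crux rigorous ``through the explicit join and split operations of \cite{ABD+20}'' --- defers exactly the missing step to the paper whose theorem you are trying to prove, so the attempt does not contain a proof of it.
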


As a corollary, we get a formula for parking completions when the occupied spots form a contiguous block. We use the following version due to Yin. 

\begin{lemma}[\cite{Yin21I}, Proposition 2.7] \label{parkingcompletionblock}
Let $1 \leq \ell \leq n$ and let $1 \leq k \leq n - \ell + 1$. Then
\bal
|\PC_n((\pi_1 = k, \ldots, \pi_\ell = k+\ell - 1))| = \sum_{s = 0}^{n-\ell} \binom{n-\ell}{s}(s+\ell)^{s-1} \ell (n-s-\ell + 1)^{n-s-\ell -1}.
\eal
\end{lemma}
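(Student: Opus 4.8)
The plan is to derive this as a corollary of Lemma \ref{parkingcompletion} by specializing to the contiguous block $\bm{v} = (i+1, \ldots, i+\ell)$ and then collapsing the resulting inner sum with Abel's multinomial theorem (Lemma \ref{Abelmultinomial}). First I would substitute $v_j = i + j$ into the description of the index set $L_n(\bm{v})$. The defining inequalities become $s_1 + \dotsb + s_j \geq v_j - j = i$ for every $j \in [\ell]$. Since the partial sums $s_1 + \dotsb + s_j$ are nondecreasing in $j$ (the $s_j$ being nonnegative), every one of these constraints is implied by the single constraint $s_1 \geq i$. Hence
\bal
L_n(\bm{v}) = \left\{ (s_1, \ldots, s_{\ell+1}) \in \N^{\ell+1} \;\middle|\; s_1 \geq i, \; \sum_{j=1}^{\ell+1} s_j = n - \ell \right\},
\eal
and Lemma \ref{parkingcompletion} expresses $|\PC_n(\bm{v})|$ as a sum over this set.

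Next I would stratify the sum according to the value of $s_1$. Writing $s_1 = k$ with $k$ ranging from $i$ to $n-\ell$, the multinomial coefficient factors as $\binom{n-\ell}{\bm{s}} = \binom{n-\ell}{k}\binom{n-\ell-k}{s_2, \ldots, s_{\ell+1}}$, and the $j=1$ factor $(k+1)^{k-1}$ pulls out of the inner sum. This leaves
\bal
|\PC_n(\bm{v})| = \sum_{k=i}^{n-\ell} \binom{n-\ell}{k} (k+1)^{k-1} \sum_{\substack{s_2 + \dotsb + s_{\ell+1} = n-\ell-k}} \binom{n-\ell-k}{s_2, \ldots, s_{\ell+1}} \prod_{j=2}^{\ell+1} (s_j + 1)^{s_j - 1}.
\eal

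The key step is to recognize the inner sum as an instance of $A_N(x_1, \ldots, x_m; p_1, \ldots, p_m)$ from Lemma \ref{Abelmultinomial} with $m = \ell$ variables, total $N = n - \ell - k$, all $x_j = 1$, and all $p_j = -1$. Applying the special case $A_N(1, \ldots, 1; -1, \ldots, -1) = \ell(\ell + N)^{N-1}$ and substituting $N = n - \ell - k$, so that $\ell + N = n - k$ and $N - 1 = n - k - \ell - 1$, collapses the inner sum to $\ell(n-k)^{n-k-\ell-1}$. Substituting this back into the stratified expression yields exactly the claimed formula.

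I expect the only real subtlety to be bookkeeping rather than any genuine obstacle: one must correctly identify the number of variables and the total in Abel's theorem, since the ``$n$'' appearing there plays the role of $n - \ell - k$ here rather than the global $n$, and one must be careful that the reduction of the constraint set to $s_1 \geq i$ genuinely uses monotonicity of the partial sums. A quick sanity check at the extreme $k = n - \ell$ (where $N = 0$) confirms the degenerate term evaluates to $1$ on both sides, which lends confidence that the index shifts are aligned.
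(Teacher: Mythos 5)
Your derivation is correct, but note that the paper itself offers no proof of this statement: it imports it verbatim as Corollary~1.2 of the cited work of Adeniran et al.\ \cite{ABD+20}, so there is no internal argument to compare against. What you have done is reconstruct that corollary from the two tools the paper does state, namely Lemma~\ref{parkingcompletion} and Abel's multinomial theorem (Lemma~\ref{Abelmultinomial}), and every step checks out. Specializing $v_j = i+j$ indeed turns all $\ell$ constraints into $s_1 + \dotsb + s_j \geq i$, and since the $s_j$ are nonnegative these collapse to the single condition $s_1 \geq i$; stratifying on $s_1 = k$ correctly factors $\binom{n-\ell}{\bm{s}} = \binom{n-\ell}{k}\binom{n-\ell-k}{s_2,\ldots,s_{\ell+1}}$ and pulls out $(k+1)^{k-1}$; and the inner sum is exactly $A_{n-\ell-k}(1,\ldots,1;-1,\ldots,-1)$ with $\ell$ variables, which the stated special case evaluates to $\ell(n-k)^{n-k-\ell-1}$. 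There is also no circularity: Lemma~\ref{parkingcompletion} is the general theorem of which the block formula is a special case, and Lemma~\ref{Abelmultinomial} comes from an independent source (Yin, via Pitman). Your sanity check at $k = n-\ell$ is the right one, since that is the only place where the exponent $n-k-\ell-1 = -1$ could cause worry, and both sides equal $1$ there. In short, the proposal is a complete and correct self-contained proof of a result the paper only cites.
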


\subsection{Stein's Method and Exchangeable Pairs}

Stein's method is a powerful technique introduced by Charles Stein which is used to bound the distance between two probability distributions. 
It has been developed for many target distributions and successfully applied to establish limit theorems in a wide range of settings. 
The main advantage of using Stein's method is that it provides an explicit error bound on the distributional approximation. 

There are many variants of Stein's method, but we use the exchangeable pairs method. The ordered pair $(W, W')$ of random variables is an {\em exchangeable pair} if $(W, W') \overset{d}{=} (W', W)$. 
We will use the following multivariate version of Stein's method for Poisson approximation via exchangeable pairs due to Chatterjee, Diaconis, and Meckes. 

\begin{theorem}[\cite{CDM05}, Proposition 10] \label{multivariateStein}
Let $W = (W_1,\ldots, W_d)$ be a random vector with values in $\N^d$ and $E(W_i) = \lambda_i < \infty$. Let $Z = (Z_1, \ldots, Z_d)$ have independent coordinates with $Z_i \sim \Poi(\lambda_i)$. 
Let $W' = (W_1',\ldots, W_d')$ be defined on the same probability space as $W$ with $(W,W')$ an exchangeable pair. Then
\bal
d_{TV}(W, Z) \leq \sum_{k=1}^d \alpha_k \left[ E|\lambda_k - c_k P(A_k)| + E|W_k - c_k P(B_k)| \right],
\eal
with $\alpha_k = \min\{1, 1.4\lambda_k^{-1/2}\}$, any choice of the $\{c_k\}$, and
\bal
A_k &= \{W_k' = W_k + 1, W_j = W_j' \text{ for $k+1 \leq j \leq d$}\}, \\
B_k &= \{W_k' = W_k - 1, W_j = W_j' \text{ for $k+1 \leq j \leq d$}\}. 
\eal
\end{theorem}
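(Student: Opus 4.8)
The plan is to prove this as an instance of the generator form of Stein's method for the product Poisson law $\nu = \bigotimes_{k=1}^d \Poi(\lambda_k)$: I would reduce the $d$-dimensional total variation bound to $d$ univariate Poisson--Stein problems and then read the events $A_k$, $B_k$ off the exchangeability of $(W,W')$. Throughout, all probabilities are to be taken conditionally on $W$, so that $P(A_k)$, $P(B_k)$ are the conditional transition probabilities and the $c_k$ are free (possibly $W$-measurable) normalizing constants. First I would introduce the generator $\mathcal{A}$ of the multivariate immigration--death process whose stationary law is $\nu$, namely $(\mathcal{A}g)(w) = \sum_{k=1}^d \lambda_k\big(g(w+e_k)-g(w)\big) + \sum_{k=1}^d w_k\big(g(w-e_k)-g(w)\big)$, where $e_k$ is the $k$th standard basis vector. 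For $A \subseteq \N^d$ I would let $g = g_A$ solve the Stein equation $\mathcal{A}g = \I_A - \nu(A)$; since $\nu$ is stationary for $\mathcal{A}$, this gives $P(W \in A) - P(Z \in A) = E[(\mathcal{A}g)(W)]$, and hence $d_{TV}(W,Z) = \sup_A |E[(\mathcal{A}g_A)(W)]|$. Writing $\Delta_k g(w) = g(w+e_k)-g(w)$, the right-hand side decomposes as a sum over $k$ of an immigration term $E[\lambda_k \Delta_k g(W)]$ and a death term $-E[W_k \Delta_k g(W-e_k)]$.

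Next I would bound the solution one coordinate at a time. Because $\nu$ is a product measure, $g_A$ can be constructed by solving the Stein recursion coordinatewise, which expresses each increment $\Delta_k g$ through the univariate Poisson--Stein solution for $\Poi(\lambda_k)$ with the remaining coordinates held fixed. The standard magic-factor estimate for the first difference of that univariate solution then gives $\|\Delta_k g\|_\infty \le \alpha_k = \min\{1, 1.4\lambda_k^{-1/2}\}$, which is precisely where the weights $\alpha_k$ enter the bound. The order imposed on the coordinates by this telescoping is what forces the constraint $W_j = W_j'$ for $j > k$ in the definitions of $A_k$ and $B_k$: at stage $k$ the higher-indexed coordinates are already frozen.

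The crux is the exchangeability identity. For any symmetric function $\phi$, the function $F(w,w') = \phi(w,w')\big(\I_{A_k}(w,w') - \I_{B_k}(w,w')\big)$ is antisymmetric, since swapping $w$ and $w'$ interchanges the events $A_k$ and $B_k$; hence $E[F(W,W')] = 0$, i.e.\ $E[\phi(W,W')\I_{A_k}] = E[\phi(W,W')\I_{B_k}]$. Choosing $\phi$ to agree with $\Delta_k g$ on these events, this identity lets me replace the immigration rate $\lambda_k$ by the surrogate $c_k P(A_k)$ and the death rate $W_k$ by the surrogate $c_k P(B_k)$ in the generator expression, the constant $c_k$ being the common normalization of the up- and down-step conditional probabilities. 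The error committed at coordinate $k$ is then at most $\|\Delta_k g\|_\infty$ times the total discrepancy between the true rates and their surrogates, namely $\alpha_k\big(E|\lambda_k - c_k P(A_k)| + E|W_k - c_k P(B_k)|\big)$. Summing over $k$ and taking the supremum over $A$ yields the stated bound, for any choice of the $c_k$.

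The step I expect to be the main obstacle is the coordinatewise telescoping underlying the second and third paragraphs. One must verify that the multivariate Stein solution genuinely decomposes so that each $\Delta_k g$ is controlled by a single univariate magic factor, and that the antisymmetric function producing the exchangeability identity aligns exactly with the events $A_k$ and $B_k$ --- the $+1$ and $-1$ steps in coordinate $k$ with coordinates $j > k$ frozen --- for an arbitrary free constant $c_k$. Checking that the freezing of the higher coordinates is consistent between the two reductions, and that no cross terms between distinct coordinates survive, is the delicate bookkeeping on which the clean additive form of the bound rests.
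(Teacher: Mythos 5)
First, a point of calibration: the paper does not prove this statement at all---it is imported verbatim from Chatterjee--Diaconis--Meckes \cite{CDM05} (Proposition 10)---so your attempt has to be measured against the original proof there. That proof does \emph{not} solve a single multivariate Stein equation. It telescopes coordinate by coordinate: $d_{TV}(W,Z)$ is bounded by a sum over $k$ of one-coordinate replacements, comparing $\mathcal{L}(Z_1,\ldots,Z_{k-1},W_k,\ldots,W_d)$ with $\mathcal{L}(Z_1,\ldots,Z_k,W_{k+1},\ldots,W_d)$. At stage $k$ the coordinates $j<k$ have been integrated out against their Poisson laws, so the test function depends only on $(w_k,\ldots,w_d)$; one then solves the \emph{univariate} Poisson--Stein equation in $w_k$ with $(w_{k+1},\ldots,w_d)$ as frozen parameters, applies the Barbour--Holst--Janson bound $\|f\|_\infty\le\min\{1,\,1.4\lambda_k^{-1/2}\}$ (this is the source of $\alpha_k$: it is the sup norm of the univariate solution, not a first difference of a multivariate one), and uses the exchangeable-pair antisymmetry identity essentially as you describe it. Your reading of $P(A_k),P(B_k)$ as conditional probabilities and your antisymmetry identity (swapping $w,w'$ interchanges $A_k$ and $B_k$) are both correct.

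The genuine gaps are in your multivariate reduction. (1) The claim that the solution $g_A$ of $\mathcal{A}g=\I_A-\nu(A)$ ``decomposes coordinatewise'' with $\|\Delta_k g_A\|_\infty\le\min\{1,\,1.4\lambda_k^{-1/2}\}$ is unsubstantiated: for a general $A\subseteq\N^d$, not a product set, the coordinates interact through $A$ and there is no coordinatewise recursion. The standard estimate for the multivariate immigration--death solution comes from the coupling in which the extra particle at coordinate $k$ dies at rate $1$, giving only $\|\Delta_k g_A\|_\infty\le 1$; a $\lambda_k^{-1/2}$ gain requires a separate smoothing estimate {\`a} la Barbour--Brown, and the constant does not simply inherit the univariate $1.4$. (2) More decisively, your setup cannot produce the stated events. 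Since $\Delta_k g(W)$ depends on \emph{every} coordinate of $W$, the surrogate-replacement step in the generator approach needs the events $\{W'=W\pm e_k\}$, i.e.\ all coordinates $j\ne k$ frozen. But on $A_k$ as defined, the coordinates $j<k$ of $W'$ are free to differ from those of $W$---and in this paper's application they genuinely do: splitting a cycle of length $L(C_a)>k$ creates a $k$-cycle \emph{and} a cycle of length $L(C_a)-k$, which may be smaller than $k$. So there is no $\phi$ ``agreeing with $\Delta_k g$ on these events'' that validates replacing $\lambda_k$ by $c_kP(A_k)$ and $W_k$ by $c_kP(B_k)$. The one-sided constraint $k+1\le j\le d$ is precisely an artifact of the telescoping order---at stage $k$ the lower coordinates are invisible to the test function, so the pair need only agree above $k$. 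Your final paragraph correctly flags this as the crux, but as proposed the single-equation route fails there; repairing it forces you back to the coordinatewise hybrid argument of \cite{CDM05}.
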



\section{Expected Number of Cycles of a Fixed Length} \label{Expectedcyclecount}

In this section we compute the expected number of $k$-cycles in a random parking function. We consider the cases of fixed points and transpositions separately, as we are able to compute their expected values exactly. 
For general $k$, we compute the asymptotic expected number of $k$-cycles. 

\subsection{Fixed Points and Transpositions}

Let $\fp(\pi)$ and $\tc(\pi)$ be the number of fixed points and the number of transpositions, respectively, of $\pi \in \PF_n$. 
We can decompose $\fp(\pi)$ and $\tc(\pi)$ into a sum of indicator random variables as
\bal
\fp(\pi) = \sum_{i=1}^n \I_{\{\pi_i = i\}}, \qquad \tc(\pi) &= \sum_{1 \leq i < j \leq n} \I_{\{\pi_i = j, \pi_j = i\}}
\eal

\begin{proposition} \label{parkingfptcmean}
Let $\pi \in \PF_n$ be a parking function chosen uniformly at random. Then the expected number of fixed points of $\pi$ is
\bal
E(\fp(\pi)) = 1,
\eal
and the expected number of transpositions is
\bal
E(\tc(\pi)) = \frac{n}{2(n+1)}.
\eal
\end{proposition}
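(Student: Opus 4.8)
The plan is to apply linearity of expectation to the two indicator decompositions, exploit the fact that $\PF_n$ is invariant under permuting coordinates, and reduce everything to a single two-coordinate marginal. For the fixed points, linearity gives $E(\fp(\pi)) = \sum_{i=1}^n P(\pi_i = i)$. Since swapping positions $1$ and $i$ is a bijection of $\PF_n$ carrying $\{\pi : \pi_i = i\}$ onto $\{\pi : \pi_1 = i\}$, we have $|\{\pi : \pi_i = i\}| = |\{\pi : \pi_1 = i\}|$. Summing over $i$, the sets $\{\pi : \pi_1 = i\}$, $i \in [n]$, partition $\PF_n$, so $\sum_{i=1}^n |\{\pi : \pi_i = i\}| = |\PF_n|$ and $E(\fp(\pi)) = 1$ exactly; no asymptotics are needed.

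For the transpositions, linearity gives $E(\tc(\pi)) = \sum_{1 \le i < j \le n} P(\pi_i = j, \pi_j = i)$. Writing $N(a,b) := |\{\pi \in \PF_n : \pi_1 = a, \pi_2 = b\}|$, coordinate-exchangeability gives $|\{\pi : \pi_i = j, \pi_j = i\}| = N(j,i)$, and swapping positions $1,2$ shows $N$ is symmetric. As $(i,j)$ runs over $i<j$, the unordered value pair $\{i,j\}$ runs over all two-element subsets of $[n]$ exactly once, so $\sum_{i<j} N(j,i) = \tfrac12 \sum_{a \ne b} N(a,b) = \tfrac12\, |\{\pi : \pi_1 \ne \pi_2\}|$, the last equality because the sets $\{\pi : \pi_1 = a, \pi_2 = b\}$ over ordered distinct pairs partition $\{\pi : \pi_1 \ne \pi_2\}$. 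Dividing by $|\PF_n|=(n+1)^{n-1}$ yields the clean identity $E(\tc(\pi)) = \tfrac12\bigl(1 - P(\pi_1 = \pi_2)\bigr)$, so the whole computation collapses onto the single quantity $P(\pi_1 = \pi_2)$.

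The \emph{main obstacle} is therefore to show $P(\pi_1 = \pi_2) = \tfrac{1}{n+1}$, equivalently $|\{\pi \in \PF_n : \pi_1 = \pi_2\}| = (n+1)^{n-2}$. I would extract this from the internal structure of Pollak's circle argument: on $\Z_{n+1}^n$ the diagonal shift $a \mapsto a + c\,\mathbf 1 \pmod{n+1}$ acts freely, each orbit has size $n+1$ and contains exactly one parking function, so $\PF_n$ is a transversal for this action. The event $\{a_1 = a_2\}$ depends only on the difference $a_1 - a_2$ and is thus shift-invariant, hence a union of orbits, so the proportion of parking functions it contains equals the proportion of all of $\Z_{n+1}^n$ it contains, namely $\tfrac{1}{n+1}$. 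This gives $E(\tc(\pi)) = \tfrac12\bigl(1 - \tfrac{1}{n+1}\bigr) = \tfrac{n}{2(n+1)}$.

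The step I expect to require the most care is justifying this refined form of Pollak's bijection, namely that each shift-orbit meets $\PF_n$ in exactly one point and that coordinate differences are orbit-invariant; the count $(n+1)^{n-1}$ alone does not supply the transversal structure, so I would need to recall or reprove it. As a computational fallback not relying on the circle argument, one can instead evaluate $|\{\pi : \pi_1 = \pi_2\}| = \sum_{v=1}^n |\{\pi : \pi_1 = \pi_2 = v\}|$ directly, writing each summand as an Abel multinomial sum and collapsing it via the special cases of Lemma \ref{Abelmultinomial}, at the cost of a longer but routine calculation.
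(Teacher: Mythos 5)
Your proof is correct, and it takes a genuinely different route from the paper's. The paper proves both identities computationally, expressing each probability through parking completions (Lemma \ref{parkingcompletion}) and collapsing the resulting sums with Abel's multinomial theorem (Lemma \ref{Abelmultinomial}); for transpositions this is a lengthy multinomial manipulation. You avoid that machinery entirely: for fixed points, your observation that the sets $\{\pi : \pi_1 = i\}$, $i \in [n]$, partition $\PF_n$ makes the Abel computation unnecessary (the paper is in effect re-deriving $\sum_{i=1}^n |\PC_n((i))| = |\PF_n|$ the hard way), and for transpositions your reduction $E(\tc(\pi)) = \tfrac12\bigl(1 - P(\pi_1 = \pi_2)\bigr)$ plus the orbit structure of Pollak's circle argument replaces the computation by a symmetry argument. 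The step you flag as delicate --- that $\PF_n$ is a transversal of the diagonal shift action on $\Z_{n+1}^n$ and that $\{a_1 = a_2\}$ is a union of orbits --- is genuine content, but it is exactly what Pollak's proof (cited as \cite{FR74}) establishes: shifting all preferences by $c$ shifts the unique empty spot on the circle by $c$, so each orbit contains exactly one sequence leaving spot $n+1$ empty, and the shift-invariance of $\{a_1 = a_2\}$ is immediate; so the gap is real but standard, and your Abel-sum fallback would also close it. As for what each approach buys: the paper's computation serves as a template that is reused, in asymptotic form, for general $k$-cycles in Theorem \ref{parkingcyclesmean}; but your route generalizes as well, and in a stronger form --- by full exchangeability of coordinates one gets $E(C_k(\pi)) = \tfrac1k P(\pi_1, \ldots, \pi_k \text{ all distinct})$, and pairwise distinctness is again shift-invariant, yielding the exact formula $E(C_k(\pi)) = n(n-1)\cdots(n-k+2)/\bigl(k(n+1)^{k-1}\bigr)$, from which Theorem \ref{parkingcyclesmean} follows immediately. (One incidental benefit: your derivation sidesteps the factor $\tfrac12$ in the inner sum $\tfrac{(n-t+s)(n-t-s-1)}{2}$, which the paper's displayed computation drops midway and only restores in its final answer.)
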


\begin{proof}
For fixed points, linearity of expectation gives
\bal
E(\fp(\pi)) = \sum_{i=1}^n E( \I_{\{\pi_i = i\}}) = \sum_{i=1}^n P(\pi_i = i) = \sum_{i=1}^n P(\pi_1 = i),
\eal
where the last equality follows by the symmetry of coordinates. 

By Lemma \ref{parkingcompletion}, 
\bal
&\sum_{i=1}^n |\{\pi \in \PF_n : \pi_1 = i\}| = \sum_{i=1}^n |\PC_n((i))| \\ 
&= \sum_{i=1}^n \sum_{s=0}^{n-i} \binom{n-1}{s}(s+1)^{s-1}(n-s)^{n-s-2} \\
&= \sum_{s=0}^{n-1} \binom{n-1}{s}(s+1)^{s-1}(n-s)^{n-s-2} \sum_{i=1}^{n-s} 1 \\
&= \sum_{s=0}^{n-1} \binom{n-1}{s}(s+1)^{s-1}(n-s)^{n-s-2} (n-s) \\
&= \sum_{s=0}^{n-1} \binom{n-1}{s}(s+1)^{s-1}(n-s)^{n-s-1} \\
&= A_{n-1}(1,1;-1,0) \\
&= (n+1)^{n-1}
\eal
where the last two equalities follow from Abel's multinomial theorem, Lemma \ref{Abelmultinomial}. 
Combining the above and using $|\PF_n| = (n+1)^{n-1}$ yields
\bal
E(\fp(\pi)) &= \sum_{i=1}^n P(\pi_1 = i) = \sum_{i=1}^n \frac{|\PC_n((i))|}{|\PF_n|}  = 1.
\eal

Next we consider transpositions. By linearity of expectation, 
\bal
E(\tc(\pi)) &= \sum_{1 \leq i < j \leq n} E(\I_{\{\pi_i = j, \pi_j = i\}}) = \sum_{1 \leq i < j \leq n} P(\pi_i = j, \pi_j = i) \\
&= \sum_{1 \leq i < j \leq n} P(\pi_1 = i, \pi_2 = j),
\eal
where the last equality follows by symmetry of coordinates. 

Using Lemma \ref{parkingcompletion} gives
\bal
&\sum_{1 \leq i < j \leq n} |\{\pi \in \PF_n : \pi_1 = i, \pi_2 = j\}| = \sum_{1 \leq i < j \leq n} |\PC((i,j))| \\
&= \sum_{1 \leq i < j \leq n} \sum_{\bm{s} \in L_n((i,j))} \binom{n-2}{\bm{s}} \prod_{i=1}^3 (s_i + 1)^{s_i - 1},
\eal
where 
\bal
L_n((i,j)) = \left\{ \bm{s} = (s_1, s_2, s_3) \in \N^3 \middle| s_1 \geq i-1, s_1 + s_2 \geq j-2, s_1 + s_2 + s_3 = n-2\right\}. 
\eal
This gives us the summation indices for the sum over $\bm{s}$, so that
\bal
\sum_{1 \leq i < j \leq n} |\PC(i,j)| &= \sum_{i=1}^{n-1} \sum_{j=i+1}^n \sum_{s_1 = i-1}^{n-2} \sum_{s_2 = j-2-s_1}^{n-2-s_1} \binom{n-2}{s_1, s_2, n-2-s_1-s_2} \\
&\qquad \times (s_1 + 1)^{s_1 - 1}(s_2 + 1)^{s_2 - 1}(n-1-s_1-s_2 )^{n-3-s_1-s_2} \\
&= \sum_{s_1 = 0}^{n-2} \sum_{s_2 = 0 }^{n-2-s_1} \binom{n-2}{s_1, s_2, n-2-s_1-s_2}  \\
&\qquad \times (s_1 + 1)^{s_1 - 1}(s_2 + 1)^{s_2 - 1}(n-1-s_1-s_2)^{n-3-s_1-s_2} \\
&\qquad \times \sum_{i=1}^{s_1 + 1} \sum_{j=i+1}^{s_1 + s_2 + 2} 1.
\eal
By a change of variables with $s = s_2$ and $t = n-2-s_1-s_2$ we get
\bal
\sum_{1 \leq i < j \leq n} |\PC((i,j))| &= \sum_{s = 0}^{n-2} \sum_{t = 0 }^{n-2-s} \binom{n-2}{s,t,n-2-s-t}  \\
&\qquad \times (s + 1)^{s - 1}(t + 1)^{t - 1}(n-1-s-t)^{n-3-s-t} \\
&\qquad \times \sum_{i=1}^{n-1-s-t} \sum_{j=i+1}^{n-t} 1.
\eal
Computing the inner sum yields
\bal
\sum_{i=1}^{n-1-s-t} \sum_{j=i+1}^{n-t} 1 &= \sum_{i=1}^{n-1-s-t} (n-t-i) = \frac{(n-t+s)(n-t-s-1)}{2}, 
\eal
and plugging this back gives
\bal
\sum_{1 \leq i < j \leq n} |\PC((i,j))| &= \frac{1}{2} \sum_{s = 0}^{n-2} \sum_{t = 0 }^{n-2-s} \binom{n-2}{s,t,n-2-s-t}  \\
&\qquad \times (s + 1)^{s - 1}(t + 1)^{t - 1}(n-1-s-t)^{n-2-s-t}(n-t+s).
\eal

For ease of notation, define 
\bal
f(n,s,t) := \binom{n-2}{s,t,n-2-s-t}(s + 1)^{s - 1}(t + 1)^{t - 1}(n-1-s-t)^{n-2-s-t}. 
\eal
We distribute the $(n-t+s)$ term so that the sum splits into three components
\bal
&\sum_{1 \leq i < j \leq n} |\PC((i,j))| = \frac{1}{2} \sum_{s = 0}^{n-2} \sum_{t = 0 }^{n-2-s} nf(n,s,t) - \frac{1}{2} \sum_{s = 0}^{n-2} \sum_{t = 0 }^{n-2-s} tf(n,s,t) + \frac{1}{2} \sum_{s = 0}^{n-2} \sum_{t = 0 }^{n-2-s} sf(n,s,t) \\
&\qquad = \frac{n}{2} \sum_{s = 0}^{n-2} \sum_{t = 0 }^{n-2-s} f(n,s,t) - \frac{1}{2} \sum_{t = 0}^{n-2} \sum_{s = 0 }^{n-2-t} tf(n,s,t) + \frac{1}{2} \sum_{s = 0}^{n-2} \sum_{t = 0 }^{n-2-s} sf(n,s,t) \\
&\qquad = \frac{n}{2} \sum_{s = 0}^{n-2} \sum_{t = 0 }^{n-2-s} f(n,s,t), 
\eal
where the last two sums cancel out by symmetry of $s$ and $t$. 

Finally, we use Abel's multinomial theorem, Lemma \ref{Abelmultinomial}, to get
\bal
\frac{n}{2} \sum_{s = 0}^{n-2} \sum_{t = 0 }^{n-2-s} f(n,s,t) = \frac{n}{2} A_{n-2}(1,1,1; -1,-1,0) = \frac{n(n+1)^{n-2}}{2}.
\eal

Putting everything together gives
\bal
E(\tc(\pi)) &= \sum_{1 \leq i < j \leq n} P(\pi_1 = i, \pi_2 = j) = \sum_{1 \leq i < j \leq n} \frac{|\PC((i,j))|}{|\PF_n|} \\
&= \frac{n(n+1)^{n-2}}{2(n+1)^{n-1}} = \frac{n}{2(n+1)}. \qedhere
\eal
\end{proof}

\subsection{General $k$-Cycles}

Let $C_k(\pi)$ be the number of $k$-cycles of $\pi \in \PF_n$. We can decompose $C_k(\pi)$ into a sum of indicator random variables as
\bal
C_k(\pi) = \sum_{\alpha \in A_{k}} \I_{\{\text{$\alpha$ is a $k$-cycle in $\pi$}\}},
\eal
where $A_k = \{(i_1,\ldots, i_k) : 1 \leq i_1 < \dotsb < i_k \leq n\}$. 

\begin{proof}[Proof of Theorem \ref{parkingcyclesmean}]
The proof follows similarly as in Proposition \ref{parkingfptcmean}. We will not include all the technical details, but we walk through the key ideas. 

By linearity of expectation,
\bal
E(C_k(\pi)) &= \sum_{\alpha \in A_{k}} P(\text{$\alpha$ is a $k$-cycle in $\pi$}) \\ 
&= \sum_{1 \leq i_1 < \dotsb < i_k \leq n} P(\pi_{i_1} = i_2, \ldots, \pi_{i_k} = i_1) \\ 
&= \sum_{1 \leq i_1 < \dotsb < i_k \leq n} P(\pi_{1} = i_1, \ldots, \pi_{k} = i_k) \\
&= \frac{1}{|\PF_n|} \sum_{1 \leq i_1 < \dotsb < i_k \leq n} (k-1)!|\PC_n((i_1, \ldots, i_k))|,
\eal
where we use the symmetry of coordinates in the penultimate equality. 

Applying Lemma \ref{parkingcompletion} gives
\bal
\sum_{1 \leq i_1 < \dotsb < i_k \leq n} |\PC_n((i_1, \ldots, i_k))| &= \sum_{1 \leq i_1 < \dotsb < i_k \leq n} \sum_{\bm{s} \in L_n((i_1,\ldots,i_k))} \binom{n-k}{\bm{s}} \prod_{i=1}^{k + 1} (s_i + 1)^{s_i - 1}
\eal
where 
\bal
&L_n((i_1,\ldots,i_k)) \\
&= \left\{\bm{s} = (s_1,\ldots, s_{k+1}) \in \N^{k+1} \middle| s_1 + \dotsb + s_j \geq i_j - j \, \, \, \forall j \in [k], \sum_{j=1}^{k+1} s_j = n-k \right\}
\eal

We can extract our summation bounds from $L_n((i_1,\ldots,i_k))$, and upon applying a change of variables, we obtain
\bal
&\sum_{1 \leq i_1 < \dotsb < i_k \leq n} |\PC_n((i_1, \ldots, i_k))| \\
&\qquad \qquad = \sum_{s_1 = 0}^{n-k} \sum_{s_2 = 0}^{n-k-s_1} \dotsb \sum_{s_k = 0}^{n-k-\sum_{i=1}^{k-1} s_i} \binom{n-k}{s_1,\ldots, s_k, n-k-\sum_{i=1}^k s_i} \\
&\qquad \qquad  \times \left(n-k-\sum_{i=1}^k s_i + 1\right)^{n-k-\sum_{i=1}^k s_i - 1} \prod_{i=1}^k (s_i + 1)^{s_i - 1} \\
&\qquad \qquad \times \sum_{i_1 = 1}^{n-(k-1)-\sum_{j=1}^k s_j} \left( \sum_{i_2 = i_1 + 1}^{n-(k-2)-\sum_{j=1}^{k-1} s_j} \left( \dotsb  \left( \sum_{i_k = i_{k-1} + 1} ^{n-s_1} 1\right) \dotsb \right) \right).
\eal

After some lengthy computations, the nested sum evaluates to a degree $k$ polynomial of the form
\bal
&\sum_{i_1 = 1}^{n-(k-1)-\sum_{j=1}^k s_j} \left( \sum_{i_2 = i_1 + 1}^{n-(k-2)-\sum_{j=1}^{k-1} s_j} \left( \dotsb  \left( \sum_{i_k = i_{k-1} + 1} ^{n-s_1} 1\right) \dotsb \right) \right) \\
&\qquad \qquad = \frac{(n-(k-1)-\sum_{i=1}^k s_i)n^{k-1}}{k!} + O(n^{k-1}). 
\eal

Plugging this expression back into the sum above and applying Abel's multinomial theorem, Lemma \ref{Abelmultinomial}, yields
\bal
&\sum_{1 \leq i_1 < \dotsb < i_k \leq n} |\PC_n((i_1, \ldots, i_k))| \\
&\qquad \qquad = \sum_{s_1 = 0}^{n-k} \sum_{s_2 = 0}^{n-k-s_1} \dotsb \sum_{s_k = 0}^{n-k-\sum_{i=1}^{k-1} s_i} \binom{n-k}{s_1,\ldots, s_k, n-k-\sum_{i=1}^k s_i} \\
&\qquad \qquad  \times \left(n-k-\sum_{i=1}^k s_i + 1\right)^{n-k-\sum_{i=1}^k s_i} \prod_{i=1}^k (s_i + 1)^{s_i - 1} \times \left( \frac{n^{k-1}}{k!} + O(n^{k-2}) \right) \\
& \qquad \qquad = A_{n-k}(1, 1,\ldots, 1, 1; -1, -1,\ldots, -1, 0) \left( \frac{n^{k-1}}{k!} + O(n^{k-2}) \right) \\
& \qquad \qquad = (n+1)^{n-k} \left( \frac{n^{k-1}}{k!} + O(n^{k-2}) \right). 
\eal

Therefore,
\bal
E(C_k(\pi)) &= \frac{1}{|\PF_n|} \sum_{1 \leq i_1 < \dotsb < i_k \leq n} (k-1)! |\PC_n((i_1, \ldots, i_k))| \\
&= \frac{(n+1)^{n-k}(k-1)!}{(n+1)^{n-1}} \left( \frac{n^{k-1}}{k!} + O(n^{k-2}) \right) \\
&= \frac{1}{k}\left( \frac{n}{n+1} \right)^{k-1} + O\left( \frac{1}{n} \right)
\eal
so that $E(C_k(\pi)) \sim \frac{1}{k}$, as desired. 
\end{proof}


\section{Poisson Limit Theorem for Cycles} \label{CycleLimitTheorem}

In this section, we introduce a useful directed graph representation for parking functions and construct our exchangeable pair. We then use these constructions along with the multivariate Stein's method to prove Theorem \ref{cyclesPoissonlimit}.

\subsection{Digraph Representation of Parking Functions}

It will be useful to represent a parking function of size $n$ as {\em directed graphs}, or {\em digraphs}, on $n$ vertices with labels in $[n]$. 
For $\pi \in \PF_n$, its digraph representation is the digraph with a directed edge from $i$ to $\pi_i$, for all $i \in [n]$, so that every vertex has outdegree one. Note that fixed points are represented by a vertex with a self-loop. 
See Figure \ref{Fig1DigraphRep} for an example.
Thus digraphs of parking functions consist of connected components, with each connected component comprised of rooted trees arranged in a cycle. 
For a fixed connected component, we refer to the cycle as the {\em cycle component} and we refer to a tree as a {\em tree component}. 

Let $C_a$ denote the cycle containing the vertex $a \in [n]$ and let $L(C_a)$ be the length of $C_a$. 
Similarly, if $a$ is contained in a tree component $T_a$, let $P_a$ be the unique path between $a$ and the root of $T_a$, which lies on the cycle component, and let $L(P_a)$ be the length, or number of edges, of $P_a$. 

The following lemma is useful in approximating the probability that a vertex is contained in some cycle or path of fixed length. The bound is quite crude, but suffices for our purposes. 

\begin{figure}
\centering
\scalebox{0.8}{
\begin{tikzpicture} [node distance = {15mm}, thick, main/.style = {draw, circle}]
\node[main] (9) {$9$};
\node[main] (6) [below right of=9] {$6$};
\node[main] (8) [above right of=6] {$8$};
\node[main] (1) [below right of=6] {$1$};
\node[main] (5) [below left of=1] {$5$};
\node[main] (7) [below of=1] {$7$};
\node[main] (2) [below right of=1] {$2$};
\node[main] (3) [above right of=2] {$3$};
\node[main] (11) [below right of=2] {$11$};
\node[main] (12) [above right of=3] {$12$};
\node[main] (10) [above right of=12] {$10$};
\node[main] (4) [above left of=10] {$4$};
\draw[->] (6) to [out=180, in=270] (9); 
\draw[->] (9) to [out=90, in=90] (8); 
\draw[->] (8) to [out=270, in=0] (6); 
\draw[->] (1) -- (6); 
\draw[->] (5) -- (1);
\draw[->] (7) -- (1);
\draw[->] (2) -- (1); 
\draw[->] (11) -- (2); 
\draw[->] (3) -- (2); 
\draw[->] (12) -- (10); 
\draw[->] (10) -- (4); 
\draw[->] (4) to [out=180, in=270, looseness = 8] (4); 
\end{tikzpicture}
}
\caption{The digraph representation of the parking function $\pi =  (6 \, 1 \, 2 \, 4 \, 1 \, 9 \, 1 \, 6 \, 8 \, 4 \, 2 \, 10) \in \PF_{12}$. It consists of two components, where each component consists of a tree component attached to a cycle component.}
\label{Fig1DigraphRep}
\end{figure}
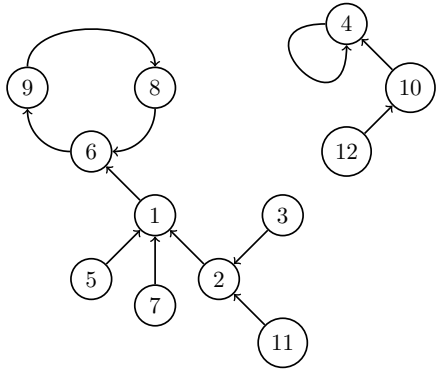

\begin{lemma} \label{cycleprobUB}
Fix $a \in [n]$ and let $\pi \in \PF_n$ be a parking function chosen uniformly at random. Then
\bal
P(L(C_a) = k) \leq \frac{k+1}{n+1}, 
\eal
for all $k \in [n]$. The same upper bound holds for $P(L(P_a) = k)$. 
\end{lemma}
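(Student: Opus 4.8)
The plan is to reduce both probabilities to sums of parking-completion counts via the coordinate symmetry of $\PF_n$, and then to bound those counts using the monotonicity of $|\PC_n(\cdot)|$ together with Abel's multinomial theorem. I will treat the cycle bound first, since it is the clean case.

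For $L(C_a)$, observe that $\{L(C_a)=k\}$ is the event that $a$ lies on a $k$-cycle, i.e. that there are distinct $c_2,\dots,c_k\neq a$ with $\pi_a=c_2,\ \pi_{c_2}=c_3,\ \dots,\ \pi_{c_k}=a$. The key point is that once these $k$ preference coordinates are pinned, the vertices $a,c_2,\dots,c_k$ form a cycle in the digraph \emph{regardless} of the remaining coordinates. Writing $S=\{a,c_2,\dots,c_k\}$ and using that the coordinates of a uniform parking function are exchangeable (so I may move the pinned positions to $1,\dots,k$, where the first $k$ cars, having distinct preferences, occupy exactly the spots of $S$), I get
\[
P(L(C_a)=k)=\frac{1}{(n+1)^{n-1}}\sum_{c_2,\dots,c_k}|\PC_n(\mathrm{sort}(S))|=\frac{(k-1)!}{(n+1)^{n-1}}\sum_{\substack{S\ni a,\ |S|=k}}|\PC_n(\mathrm{sort}(S))|,
\]
the factor $(k-1)!$ counting the cyclic orderings of each $k$-set $S$. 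Next I would bound each term uniformly. From Lemma \ref{parkingcompletion}, the index set $L_n(\bm v)$ only shrinks as the entries of $\bm v$ increase and is largest (all constraints vacuous) when $\bm v=(1,\dots,k)$; since every summand is nonnegative, $|\PC_n(\mathrm{sort}(S))|\le |\PC_n((1,\dots,k))|$ for all $S$. Abel's multinomial theorem (Lemma \ref{Abelmultinomial}, the special case with all $p_i=-1$) gives $|\PC_n((1,\dots,k))|=A_{n-k}(1,\dots,1;-1,\dots,-1)=(k+1)(n+1)^{n-k-1}$. Since there are $\binom{n-1}{k-1}$ sets $S\ni a$,
\[
P(L(C_a)=k)\le \frac{(k-1)!\binom{n-1}{k-1}}{(n+1)^{n-1}}(k+1)(n+1)^{n-k-1}=\frac{(n-1)\cdots(n-k+1)}{(n+1)^{k}}(k+1)\le \frac{k+1}{n+1},
\]
the last step using $(n-1)\cdots(n-k+1)\le (n+1)^{k-1}$. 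This settles the cycle bound.

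For $L(P_a)$ I would set up the analogous enumeration: $\{L(P_a)=k\}$ asks for distinct transient vertices $a=u_0,u_1,\dots,u_{k-1}$ with $\pi_{u_{j-1}}=u_j$ and with $u_k=:r$ lying on the cycle of the component. After pinning the path edges and moving them to positions $1,\dots,k$, the first $k$ cars again occupy a $k$-set $O=\{u_1,\dots,u_{k-1},r\}$. The main obstacle is that, unlike the cycle case, pinning a path does \emph{not} force its endpoint $r$ onto a cycle: whether $r$ is recurrent is decided by the remaining coordinates. Consequently I cannot simply bound the relevant count by $|\PC_n(\mathrm{sort}(O))|$, as that ignores the recurrence condition and over-counts by a factor of order $n$. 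The substantive step is therefore to show that among the completions with $O$ occupied only a fraction of order $1/n$ place $r$ on a cycle (equivalently, to count completions in which the root $r$ is recurrent while $u_0,\dots,u_{k-1}$ remain transient). I expect this to follow from a second application of the parking-completion machinery to the resulting rho-shape: conditioning on the cycle length $\ell$, pinning the cycle through $r$ (which introduces a repeated preference at $r$, hence an overflow spot), and resumming via Abel's theorem, which should again collapse to the bound $\tfrac{k+1}{n+1}$. Controlling this recurrence/overflow term cleanly, rather than through the lossy monotonicity estimate that sufficed for cycles, is the crux of the path argument.
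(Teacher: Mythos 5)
Your proof of the cycle bound is correct and is essentially the paper's own argument: the same decomposition of $\{L(C_a)=k\}$ into $(k-1)!\binom{n-1}{k-1}$ pinned cycle configurations, the same monotonicity $|\PC_n(\mathrm{sort}(S))|\le|\PC_n((1,\ldots,k))|$ coming from the shrinking index set $L_n(\bm{v})$ in Lemma \ref{parkingcompletion}, and the same evaluation $|\PC_n((1,\ldots,k))|=(k+1)(n+1)^{n-k-1}$ (you invoke the all-$(-1)$ case of Lemma \ref{Abelmultinomial} directly, while the paper routes through Lemma \ref{parkingcompletionblock}; the difference is immaterial).

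Your refusal to wave the path case through is the substantive point, and your diagnosis is exactly right --- moreover it exposes a gap in the paper itself, whose entire treatment of the path case is the sentence that it ``follows similarly'' without the $(k-1)!$ factor. Pinning the $k$ path edges does not force the endpoint $r=\pi^k(a)$ to be recurrent, and dropping recurrence is fatal: there are $(n-1)\cdots(n-k)$ ordered choices of $(u_1,\ldots,u_{k-1},r)$ --- one more factor of $n$ than in the cycle case, because nothing closes up --- against a per-configuration bound of $(k+1)/(n+1)^{k}$, so the union bound yields $O(k)$ rather than $O(k/n)$.

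However, your proposed rho-shape repair cannot succeed either, because the path statement itself is false. Quantitatively, the rho-shapes with cycle length $\ell$ number about $n^{k+\ell-1}$ and the best uniform per-shape bound (contiguous-block worst case, including the overflow spot created by the doubled preference $r$) is $(k+\ell+1)/(n+1)^{k+\ell}$, so summing over $\ell$ gives $\sum_\ell O((k+\ell)/n)=O(1)$, not $O(k/n)$; and this loss is real, not an artifact of crude bounds. Take $n=5$, $a=5$, $k=1$, with $L(P_a)$ counting edges (this is the paper's convention: in Lemma \ref{SteinAkUB}, path length and cycle length add when a transposition merges them). The event $\{L(P_5)=1\}$ is the disjoint union, over cycles on a set $S\subseteq[4]$ together with a pendant edge $5\to r$, $r\in S$, of pinned events with probabilities $|\PC_5(\mathrm{sort}(O))|/6^4$, where $O$ is the set of spots occupied by the pinned cars ($S$ plus the overflow spot after $r$). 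Summing these counts: fixed points contribute $108+58+34+16=216$, $2$-cycles contribute $155$, $3$-cycles $80$, and $4$-cycles $24$, so
\[
P(L(P_5)=1)=\frac{475}{1296}>\frac{1}{3}=\frac{k+1}{n+1}.
\]
This is not a small-$n$ accident: $\sum_a P(L(P_a)=1)$ is the expected number of vertices at distance one from their cycle, which the claimed bound would cap below $2n/(n+1)<2$, yet its exact values for $n=2,3,4,5$ are $1/3,\ 11/16,\ 127/125,\ 1704/1296$, steadily increasing (heuristically it grows like the number of cyclic vertices, of order $\sqrt{n}$ up to logarithmic corrections). So the path half of the lemma must be weakened, and its uses in Lemmas \ref{SteinAkUB} and \ref{SteinBkUB} revisited; your instinct that the recurrence/overflow term is the crux, and that the cycle-case monotonicity trick does not control it, was correct.
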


\begin{proof}
For fixed $a \in [n]$, there are $\binom{n-1}{k-1}$ ways to pick the remaining values to be in cycle $C_a$, and there are $(k-1)!$ distinct cycles that can be formed from these $k$ values. 
Suppose $i_1 < \dotsb < i_k$ are the values in cycle $C_a$ placed in increasing order, where $a$ is one of the $i_j$'s. There are $|\PC_n((i_1, \ldots, i_k))|$ ways to complete $\pi$ so that it is a parking function. 

Observe that the value of $|\PC_n((i_1, \ldots, i_k))|$ differs, depending on the values of $i_1, \ldots, i_k$. However, note that $|\PC_n((i_1, \ldots, i_k))|$ will be maximized when $i_j = j$ for $1 \leq j \leq k$. 
That is, if cycle $C_a = (1,\ldots,k)$. To see why this is true, observe that the sum in Lemma \ref{parkingcompletion} decreases as each $i_j$ increases past $j$, since there will be fewer resulting summands. It follows that
\bal
|\PC_n((i_1, \ldots, i_k))| \leq |\PC_n((1, \ldots, k))|
\eal
for any choice of $i_1, \ldots, i_k$. 

By Lemma \ref{parkingcompletionblock},
\bal
|\PC_n((1, \ldots, k))| &= \sum_{s=0}^{n-k} \binom{n-k}{s}(s+1)^{s-1} k (n-s)^{n-s-k -1} \\
&= k A_{n-k}(1,k; -1, -1) \\
&= (k+1)(n+1)^{n-k-1},
\eal
where last equality follows from Abel's multinomial theorem, Lemma \ref{Abelmultinomial}. Thus
\bal
P((1,\ldots, k) \text{ is a $k$-cycle}) &= \frac{(k-1)! |\PC_n((1,\ldots, k))|}{|\PF_n|} \\
&= \frac{(k-1)!(k+1)(n+1)^{n-k-1}}{(n+1)^{n-1}} \\
&= \frac{(k-1)!(k+1)}{(n+1)^k}.
\eal 

Therefore 
\bal
P(L(C_a) = k) &\leq \binom{n-1}{k-1}P((1,\ldots, k) \text{ is a $k$-cycle}) = \frac{\binom{n-1}{k-1} (k-1)!(k+1)}{(n+1)^k} \\
&= \frac{(n-1)\dotsb (n-k+1) (k+1)}{(n+1)^k} \leq \frac{(n+1)^{k-1} (k+1)}{(n+1)^k} \\
&= \frac{k+1}{n+1}.
\eal

Next we turn to the bound for $P(L(P_a) = k)$. Recall that $P_a$ is the unique path between $a$ and the root of of the tree component $T_a$. Hence if $L(P_a) = k$, then there are $k+1$ vertices on path $P_a$ including $a$ and the root. 
There are $\binom{n-2}{k-1}$ ways to pick the vertices between $a$ and the root, and there are $(k-1)!$ ways to permute these vertices. The rest of the argument follows in the same way as the cycle case. 
\end{proof}


\subsection{Upper Bound on the Total Variation Distance}

We begin by constructing an exchangeable pair. 

Pick a transposition $\tau = (a \, b) \in S_n$ uniformly at random. Let $\pi' = \pi \circ \tau$, so that the values at positions $a$ and $b$ are transposed. 
That is, $\pi_a$ and $\pi_b$ are transposed, so that $a$ is sent to $\pi_b$ and $b$ is sent to $\pi_a$ in $\pi'$. Set $W' = W(\pi')$. Then by construction $(W, W')$ is an exchangeable pair. 

Define the events
\bal
A_k &= \{W_k' = W_k + 1, W_j = W_j' \text{ for $k+1 \leq j \leq d$}\}, \\
B_k &= \{W_k' = W_k - 1, W_j = W_j' \text{ for $k+1 \leq j \leq d$}\}. 
\eal

In order to apply the multivariate Stein's method via exchangeable pairs, we will need the following two lemmas. 

\begin{lemma} \label{SteinAkUB} 
Let $(W, W')$ be the exchangeable pair defined above. If $\lambda_k = E(W_k)$ and $c_k = \frac{n}{6k}$, then
\bal
E|\lambda_k - c_k P(A_k)| &\leq \frac{2d^2 + 2dk + 2d + 8k^2 + 4k + 1}{k(n-1)} + \frac{k^2 + 6k}{n-k+1} + O(n^{-1}).
\eal
\end{lemma}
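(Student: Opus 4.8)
The plan is to estimate the conditional probability $P(A_k) = P(A_k \mid W)$ directly and to show that $c_k P(A_k)$ is within the stated error of $\lambda_k = \tfrac1k$. Since the transposition $\tau = (a\,b)$ is chosen uniformly and independently of $\pi$, I would first write
\[
P(A_k \mid W) = \frac{2}{n(n-1)}\, E\!\left[ N_k(\pi) \,\middle|\, W \right],
\]
where $N_k(\pi)$ is the number of unordered pairs $\{a,b\}$ such that swapping the out-edges of $a$ and $b$ in the digraph of $\pi$ produces exactly one new $k$-cycle while leaving the number of $j$-cycles unchanged for every $k < j \le d$. The prefactor $1/\binom{n}{2}$ already accounts for the denominator $n-1$ appearing throughout the bound.

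Next I would classify the swaps according to the local digraph structure at $a$ and $b$, using the description of parking-function digraphs as rooted trees arranged on cycles. Swapping the out-edges of $a$ and $b$ reroutes exactly two edges, and the effect on cycles is one of a few types: if $a,b$ lie on a common cycle it splits, if they lie on distinct cycles these merge, and in the generic case where $a,b$ lie on tree paths a fresh cycle is pinched off. The dominant, ``good'' configuration for the event $A_k$ is the last one, in which one of the two vertices is the $k$-th $\pi$-iterate of the other along a transient path that stays off every cycle, so that a single fresh $k$-cycle appears and no existing cycle is disturbed. I would count these and verify that $c_k$ times their contribution reproduces the main term $\lambda_k$.

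The bulk of the work is the error analysis, i.e.\ correcting the good count. I would separate out the swaps in which (i) one of $a,b$ already lies on a cycle of length at most $d$, or on a tree path of length at most $d$ leading to such a cycle, since then a cycle of length in $(k,d]$ is created, destroyed, merged, or split, violating the constraint $W_j' = W_j$; (ii) the transient path of length $k$ is not simple or meets a cycle prematurely; and (iii) both rerouted edges simultaneously close cycles, producing a double count. Each of these is bounded by the probability that a fixed vertex lies on a short cycle or short path, which is precisely what Lemma~\ref{cycleprobUB} controls via $P(L(C_a)=j)\le \tfrac{j+1}{n+1}$ and the same bound for $P(L(P_a)=j)$. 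Summing these tail bounds over the relevant ranges of $j$ (up to $d$, and over the $k$ vertices of the created cycle), together with the parking-completion estimates that supply the $n-k$ denominator, should collapse the corrections into the displayed quadratic-in-$(d,k)$ numerators.

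The main obstacle is this combinatorial case analysis. In the permutation setting a transposition merely splits or merges cycles, but the functional digraph of a parking function carries trees, so a single swap can simultaneously reshape tree paths and cycles; pinning down exactly which swaps realize $A_k$ and, above all, enumerating every way in which a swap can inadvertently alter a $j$-cycle for $k<j\le d$ is delicate. Organizing the resulting error terms so that they sum cleanly against the bounds of Lemma~\ref{cycleprobUB} without degrading the powers of $d$ and $k$ is where the care is required.
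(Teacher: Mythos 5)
Your setup and taxonomy do match the paper's: write $P(A_k \mid \pi) = \frac{2}{n(n-1)}N_k(\pi)$, classify the transpositions $\{a,b\}$ by the digraph configuration at $a$ and $b$, and control the non-dominant configurations with Lemma \ref{cycleprobUB}. The genuine gap is in the main term, at exactly the step you defer (``verify that $c_k$ times their contribution reproduces the main term $\lambda_k$''): that verification fails for the $c_k$ given in the statement. Your dominant family --- pairs in which $a$ is the $k$-th $\pi$-iterate of $b$ along a path off the cycles --- contains at most one pair per vertex $b$ (those $b$ with $L(P_b)>k$), hence at most $n$ pairs in total, so it contributes at most
\bal
\frac{2}{n(n-1)}\cdot n \;=\; \frac{2}{n-1}
\eal
to $P(A_k)$, and therefore
\bal
c_k\cdot \frac{2}{n-1} \;=\; \frac{n}{4k}\cdot\frac{2}{n-1}\;\approx\;\frac{1}{2k}\;\neq\;\frac{1}{k}\;=\;\lambda_k .
\eal
The deficit is of order $\lambda_k$ itself, so it cannot be absorbed by your correction terms, which are all $O(\mathrm{poly}(d,k)/n)$; as written, the plan cannot produce the displayed inequality.

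The factor of $2$ you are missing is supplied, in the paper's proof, by a family you explicitly relegate to the error analysis: transpositions of two vertices lying on one \emph{common} cycle of length greater than $d+k$ (or with $d<L(C_a)<2k$), which split off a $k$-cycle without creating or destroying any $j$-cycle for $k<j\le d$. The paper counts this family as a second main term of size $\frac{2}{n-1}$, via the rewriting $\I_{\{L(C_a)>d+k\}} = 1-\I_{\{L(C_a)\le d+k\}}$ with the complement pushed into the error bounds, and that is precisely what makes $c_k=\frac{n}{4k}$ the normalization used there. Note that the two accountings are mutually inconsistent: yours treats ``lies on a cycle'' as rare (mapping-like digraph, cyclic vertices numbering $O(\sqrt{n})$), while the paper's treats ``lies on a long cycle'' as typical (permutation-like digraph, almost all vertices on long cycles); both cannot hold. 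Whichever way it resolves, you must settle it by actually estimating $\#\{a : L(C_a)>d+k\}$ --- Lemma \ref{cycleprobUB} alone cannot do this, since summing its bound $\frac{j+1}{n+1}$ over all cycle lengths $j$ is vacuous --- because the correct constant in $c_k$, and hence whether the stated inequality is provable at all, turns on exactly this count. Carried out honestly, your count of the tree-path family forces $c_k=\frac{n}{2k}$ rather than $\frac{n}{4k}$, i.e.\ you would end up proving a statement different from the one given.
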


\begin{proof}
Let $\tau = (a \, b)$ be the transposition in the exchangeable pairs construction for $W'$. We count the transpositions such that $A_k$ occurs. There are four cases. 

\begin{enumerate}

\item If $a$ and $b$ are in different cycles $C_a$ and $C_b$, respectively, then $W_k' = W_k+1$ if $L(C_a) + L(C_b) = k$, since swapping $\pi_a$ and $\pi_b$ strings together the two cycles into a single cycle of length $k$. 

\item Suppose $a$ and $b$ are in the same cycle component $C_a$. 
Then $\tau$ breaks $C_a$ into two smaller cycles, one of which has length $k$ and another of length $L(C_a) - k$. Thus if $L(C_a) > k$, then there are two such transpositions. 
There are three subcases where $A_k$ does not occur. 

If $L(C_a) = 2k$, then $C_a$ splits into two cycles of length $k$ so that $W_k' = W_k + 2$. 
Next, if $L(C_a) \in \{k+1, k+2, \ldots, d\}$, then $W_{L(C_a)}' = W_{L(C_a)} - 1$.
Finally, if $L(C_a) \in \{2k+1, 2k+2, \ldots, d+k\}$, then $W_{L(C_a) - k}' = W_{L(C_a) - k} + 1$. 
Therefore we must have that $d < L(C_a) < 2k$ or $L(C_a) > d+k$. 

\item Suppose $a$ and $b$ are in the same connected component such that $a$ is in the tree component and $b$ is in the cycle component $C_b$. 
If $L(P_a) + L(C_b) > k$, then there is exactly one transposition that breaks $C_b$ into a smaller cycle of length $k$ and a path of length $L(P_a) + L(C_b) - k$. 
There are two subcases where $A_k$ does not occur. 

If $L(C_b) = k$, then $W_k' = W_k$ since a $k$-cycle is created but the $k$-cycle $C_b$ is destroyed. Second, if $L(C_b) \in \{k+1, k + 2, \ldots, d\}$, then $W_{L(C_b)}' = W_{L(C_b)} - 1$. Therefore we must have that $L(C_b) < k$ or $L(C_b) > d$. 

\item Finally suppose $a$ and $b$ are both in the same tree component such that $b$ lies on the unique path from $a$ to the root. 
If $L(P_a) > k$, then there is exactly one transposition that breaks $P_a$ into a cycle of length $k$ and a path of length $L(P_a) - k$. 
\end{enumerate}

Combining the cases above gives
\bal
P(A_k) = &\frac{1}{n(n-1)} \sum_{a=1}^n \sum_{b \neq a} \I_{\{L(C_a) + L(C_b) = k\}} \I_{\{C_a \neq C_b\}} \\
&+ \frac{2}{n(n-1)} \sum_{a=1}^n (\I_{\{d < L(C_a) < 2k\}} + \I_{\{L(C_a) > d+k\}}) \\
&+ \frac{2}{n(n-1)} \sum_{a=1}^n \I_{\{L(P_a) + L(C_b) > k\}} ( \I_{\{L(C_b) < k\}} + \I_{\{L(C_b) > d\}} ) \\
&+ \frac{2}{n(n-1)} \sum_{a=1}^n \I_{\{L(P_a) > k\}}.
\eal

Rewriting using $\I_{\{L(C_a) > d+k\}} = 1 - \I_{\{L(C_a) \leq d+k\}}$, $\I_{\{L(P_a) + L(C_b) > k\}} = 1 - \I_{\{L(P_a) + L(C_b) \leq k\}}$, $\I_{\{L(C_b) > d\}} = 1 - \I_{\{L(C_b) \leq d\}}$, and $\I_{\{L(P_a) > k\}} = 1 -  \I_{\{L(P_a) \leq k\}}$ yields, 
upon simplification,
\bal
P(A_k) = &\frac{6}{n-1} + \frac{1}{n(n-1)} \sum_{a=1}^n \sum_{b \neq a} \I_{\{L(C_a) + L(C_b) = k\}} \I_{\{C_a \neq C_b\}} \\
&+ \frac{2}{n(n-1)} \sum_{a=1}^n (\I_{\{d < L(C_a) < 2k\}} - \I_{\{L(C_a) \leq d+k\}}) \\
&+ \frac{1}{n(n-1)} \sum_{a=1}^n (\I_{\{L(C_b) < k\}} - \I_{\{L(C_b) \leq d\}} - \I_{\{L(P_a) + L(C_b) \leq k\}}) \\
&- \frac{1}{n(n-1)} \sum_{a=1}^n \I_{\{L(P_a) \leq k\}}.
\eal

Using the fact that $\lambda_k = \frac{1}{k} + O(n^{-1})$ from the proof of Theorem \ref{parkingcyclesmean}, $c_k = \frac{n}{6k}$, and the triangle inequality, we get
\bal
&E|\lambda_k - c_k P(A_k)| \\
&\leq \frac{1}{k(n-1)} + O(n^{-1}) + \frac{1}{6k(n-1)} \sum_{a=1}^n \sum_{b \neq a} E(\I_{\{L(C_a) + L(C_b) = k\}} \I_{\{C_a \neq C_b\}}) \\
&+ \frac{1}{3k(n-1)} \sum_{a=1}^n E(\I_{\{d < L(C_a) < 2k\}} + \I_{\{L(C_a) \leq d+k\}}) \\
&+ \frac{1}{3k(n-1)} \sum_{a=1}^n E( \I_{\{L(C_b) < k\}} + \I_{\{C_b \leq d\}} + \I_{\{L(P_a) + L(C_b) \leq k\}}) \\
&+ \frac{1}{3k(n-1)} \sum_{a=1}^n E(\I_{\{L(P_a) \leq k\}}).
\eal

We bound the expectations in the four summands above. Conditioning on the length of $C_a$ and applying Lemma \ref{cycleprobUB}, the first summand is
\bal
&E(\I_{\{L(C_a) + L(C_b) = k, C_a \neq C_b\}}) = \sum_{j=1}^{k-1} E(\I_{\{C_a \neq C_b\}}\I_{\{L(C_a) = j\}}\I_{\{L(C_b) = k-j\}}) \\
&\qquad \leq \sum_{j=1}^{k-1} P(L(C_b) = k-j \mid C_a \neq C_b, L(C_a) = j)P(L(C_a) = j) \\
&\qquad \leq \sum_{j=1}^{k-1} \left(\frac{k-j+1}{n-k+1}\right) \left( \frac{j+1}{n+1}\right) \\
&\qquad = \frac{k^3 + 6k^2 - k - 6}{6(n+1)(n-k+1)}.
\eal
By the union bound and Lemma \ref{cycleprobUB}, the second summand is
\bal
E(\I_{\{d < L(C_a) < 2k\}} + \I_{\{L(C_a) \leq d+k\}}) &\leq 2kP(L(C_a) = 2k) + (d+k)P(L(C_a) = d+k) \\
&= \frac{2k(2k+1) + (d+k)(d+k+1)}{n+1}. 
\eal
Similarly, by Lemma \ref{cycleprobUB}, the third summand is 
\bal
&E( \I_{\{L(C_b) < k\}} + \I_{\{C_b \leq d\}} + \I_{\{L(P_a) + L(C_b) \leq k\}}) \\
&\qquad \leq (k-1)P(L(C_b) = k-1) + dP(L(C_b) = d) + kP(L(P_a) + L(C_b) = k) \\
&\qquad \leq \frac{k(k-1) + d(d+1) + k(k+1)}{n+1}.
\eal
Finally, by Lemma \ref{cycleprobUB}, the fourth summand is
\bal
E( \I_{\{L(P_a) \leq k\}}) \leq kP(L(P_a) = k) \leq \frac{k(k+1)}{n+1}.
\eal

Therefore combining the above yields the upper bound
\bal
&E|\lambda_k - c_k P(A_k)| \\
&\qquad \leq \frac{1}{k(n-1)} + O(n^{-1}) + \frac{1}{6k(n-1)} \sum_{a=1}^n \sum_{b \neq a} \frac{k^3 + 6k^2 - k - 6}{6(n+1)(n-k+1)} \\
&\qquad + \frac{1}{3k(n-1)} \sum_{a=1}^n \frac{2k(2k+1) + (d+k)(d+k+1)}{n+1} \\
&\qquad + \frac{1}{3k(n-1)} \sum_{a=1}^n \frac{k(k-1) + d(d+1) + k(k+1)}{n+1} \\
&\qquad + \frac{1}{3k(n-1)} \sum_{a=1}^n \frac{k(k+1)}{n+1} \\
&\quad \leq \frac{1}{k(n-1)} + O(n^{-1}) + \frac{k^2 + 6k}{n-k+1} \\
&\qquad + \frac{d^2 + 2dk + d + 5k^2 + 3k}{k(n-1)}  + \frac{d^2 + d + 2k^2}{k(n-1)} + \frac{k+1}{n-1} \\ 
&\quad = \frac{2d^2 + 2dk + 2d + 8k^2 + 4k + 1}{k(n-1)} + \frac{k^2 + 6k}{n-k+1} + O(n^{-1}). \qedhere
\eal
\end{proof}

\begin{lemma} \label{SteinBkUB}
Let $(W, W')$ be the exchangeable pair defined above. If $c_k = \frac{n}{6k}$, then
\bal
E|W_k - c_k P(B_k)| &\leq \frac{k+1}{n-1} + \frac{dk^3 + 6k^3 - 2dk^2 + 3d^2k - 6k + 3d^2 + 3d}{k(n-k+1)}. 
\eal
\end{lemma}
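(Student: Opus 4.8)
The plan is to mirror the proof of Lemma~\ref{SteinAkUB}, but now I analyze which transpositions $\tau = (a\,b)$ realize the event $B_k$, namely those that destroy exactly one $k$-cycle while leaving the counts $W_j$ for $k+1 \le j \le d$ unchanged. The key structural difference from the $A_k$ case is that I expect the leading term of $P(B_k)$ to be a random quantity proportional to $W_k$ itself (of order $k/n$), which is precisely why the multivariate Stein bound pairs $W_k$ rather than $\lambda_k$ with $c_kP(B_k)$; the constant $c_k$ is then chosen so that $c_k$ times this leading term reproduces $W_k$ up to lower order, leaving an $O\!\left(\tfrac{k+1}{n-1}\right)$ residual after crudely bounding $E(W_k)$ via Lemma~\ref{cycleprobUB}.

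First I would classify the configurations giving $B_k$, using that in the digraph a given $k$-cycle $\mathcal{C}$ is destroyed by $\tau$ only when $\tau$ alters an out-edge incident to $V(\mathcal{C})$, i.e.\ $a\in V(\mathcal{C})$ or $b\in V(\mathcal{C})$. This splits into: (1) $a,b$ on the same $k$-cycle, where $\tau$ splits it into two cycles of lengths $j$ and $k-j$, both $<k$, so $B_k$ always holds; (2) $a$ on a $k$-cycle and $b$ on a different cycle $C_b$ of length $m$, where $\tau$ merges them into a cycle of length $k+m$, so $B_k$ holds only when $m\neq k$, $m\notin\{k+1,\dots,d\}$, and $k+m>d$; (3) $a$ on a $k$-cycle and $b$ a tree vertex, where $\tau$ cuts the cycle --- if $b$ lies in a different component no new cycle forms and $B_k$ holds, while if $b$ lies in the same component a new cycle is created from the path $P_b$ together with a cycle-arc, and $B_k$ holds only when its length avoids $\{k,\dots,d\}$; together with the symmetric cases obtained by exchanging the roles of $a$ and $b$. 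Remaining configurations (both $a,b$ tree vertices away from $\mathcal{C}$) do not decrease $W_k$ and contribute nothing.

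Next I would assemble $P(B_k) = \tfrac{1}{n(n-1)}\sum_{a}\sum_{b\neq a}\I\{\cdots\}$ from these cases. The dominant contribution, of order $n$ per $k$-cycle, comes from cutting a $k$-cycle by pairing one of its $k$ vertices with a vertex in a different component --- either a tree vertex (case 3, different component) or a vertex on a cycle longer than $d$ (case 2 with $m>d$) --- of which there are $\Theta(n)$; summing over the $k$ cycle-vertices, the two orderings of $(a,b)$, and over all $k$-cycles yields a leading term that is a fixed multiple of $W_k$. I would isolate this term and match it against $W_k$ via the choice of $c_k$, so that $W_k - c_kP(B_k)$ reduces to a multiple of $W_k$ of relative size $\tfrac{k-1}{n-1}$ plus the collected error indicators. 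After the triangle inequality, each remaining case becomes an expectation of indicators of the events $\{L(C_a)=j\}$, $\{L(P_b)=j\}$, or their intersections.

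Finally I would bound these residual expectations exactly as in Lemma~\ref{SteinAkUB}: condition on the relevant cycle or path length and invoke the uniform estimates $P(L(C_a)=j)\le \tfrac{j+1}{n+1}$ and $P(L(P_b)=j)\le \tfrac{j+1}{n+1}$ from Lemma~\ref{cycleprobUB}, then sum the resulting polynomial-in-$j$ expressions over $j$ and over $a,b$ to produce the stated bound. The main obstacle I anticipate is case~(3): correctly tracing how $\tau$ reconnects the tree path $P_b$ to the severed cycle, pinning down exactly when a cycle of forbidden length in $\{k,\dots,d\}$ is created, and verifying that the same-component contributions are genuinely lower order so that the extracted leading term is precisely the multiple of $W_k$ needed to cancel against $c_k$. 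Tracking which configurations are double-counted under the ordered sum over $(a,b)$, and ensuring the merge case does not secretly recreate a $k$-cycle, will also require care.
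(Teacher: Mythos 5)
Your proposal follows essentially the same route as the paper's proof: the same classification of transpositions (same-cycle split, cycle merge, tree vertex in a different component, tree vertex in the same component, with the remaining pairs contributing nothing), the same observation that the leading term of $P(B_k)$ is proportional to $W_k$ via the identity $W_k = \frac{1}{k}\sum_{a}\I_{\{L(C_a)=k\}}$ and is cancelled by the choice of $c_k$, and the same residual bounds obtained by conditioning and invoking Lemma~\ref{cycleprobUB}. The conditions you state in each case (e.g.\ $m>d$, or $m<k$ with $k+m>d$, for the merge case; the new cycle length avoiding $\{k,\ldots,d\}$ in the same-component tree case) agree with the paper's, and the steps you flag as delicate are exactly the ones the paper carries out explicitly.
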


\begin{proof}
Let $\tau = (a \, b)$ be the transposition in the exchangeable pairs construction for $W'$. We count the transpositions such that $B_k$ occurs. 
Observe that if $a \in C_a$ where $C_a$ is a $k$-cycle, then any transposition $\tau$ such that $b \neq a$ will break the cycle $C_a$. There are four cases.

\begin{enumerate}

\item If $a$ and $b$ are in different cycles $C_a$ and $C_b$, then $\tau$ breaks $C_a$ and $C_b$ and strings them together to form a cycle of length $L(C_b) + k$. Thus we must have that either $L(C_b) > d$, or $L(C_b) < k$ and $L(C_b) + k > d$. 

\item Suppose $a$ and $b$ are both in the same cycle component $C_a$. 
Then $\tau$ breaks $C_a$ into two smaller cycles. For fixed $a$, there are $k-1$ choices for $b$. 

\item Suppose $a$ and $b$ are in different components, with $a \in C_a$ and $b$ in a tree component. Denote this event by $G_1$. Then $\tau$ breaks $C_a$ and $P_b$, and creates a path of length $L(P_b) + L(C_a)$. 

\item Suppose $a$ and $b$ are in the same component, with $a \in C_a$ and $b$ on the tree component. 
Then $\tau$ creates a cycle whose length lies in the interval $[L(P_b), L(P_b)+L(C_a)-1]$ and a path. The created cycle must have length greater than $d$ or less than $k$. There are three cases. 

If $L(P_b) > d$, then $\tau$ creates a cycle of length greater than $d$. If $L(P_b) = d-j+1$ for $1 \leq j \leq k-1$, then there are exactly $(k-j)$ transpositions $\tau$ which create a cycle of length greater than $d$. 
Finally if $L(P_b) = j$ for $1 \leq j \leq k-1$, then there are exactly $(k-j)$ transpositions $\tau$ which creates a cycle of length less than $k$. 

\end{enumerate}

Combining the cases above gives
\bal
P(B_k) &= \frac{2}{n(n-1)} \sum_{a = 1}^n \sum_{b \neq a} \I_{\{L(C_a) = k\}} \I_{\{C_a \neq C_b\}} [\I_{\{L(C_b) > d\}} + \I_{\{L(C_b) < k\}}\I_{\{L(C_b) > d - k\}}] \\
&\quad + \frac{k-1}{n(n-1)} \sum_{a = 1}^n \I_{\{L(C_a) = k\}} + \frac{2}{n(n-1)} \sum_{a = 1}^n \sum_{b \neq a} \I_{\{L(C_a) = k\}} \I_{G_1} \\
&\quad + \frac{2}{n(n-1)} \sum_{a = 1}^n \sum_{a \neq b} \I_{\{L(C_a) = k\}} [\I_{\{L(P_b) > d\}} \\ 
&\qquad + \sum_{j=1}^{k-1} (k-j)(\I_{\{L(P_b) = d-j+1\}} + \I_{\{L(P_b) = j\}})], 
\eal 

Rewriting using $\I_{\{L(C_b) > d\}} = 1 - \I_{\{L(C_b) \leq d\}}$, $\I_{\{L(C_b) > d - k\}} = 1 - \I_{\{L(C_b) \leq d - k\}}$, $\I_{\{L(P_b) > d\}} = 1 - \I_{\{L(P_b) \leq d\}}$, and simplifying gives 
\bal
P(B_k) &= \frac{6}{n} \sum_{a = 1}^n \I_{\{L(C_a) = k\}} + \frac{2}{n(n-1)} \sum_{a = 1}^n \sum_{b \neq a} \I_{\{L(C_a) = k\}} \I_{\{C_a \neq C_b\}} [\I_{\{L(C_b) < k\}} \\ 
&\qquad - \I_{\{L(C_b) < k\}}\I_{\{L(C_b) \leq d - k\}} - \I_{\{L(C_b) \leq d\}}] \\ 
&\quad + \frac{k-1}{n(n-1)} \sum_{a = 1}^n \I_{\{L(C_a) = k\}} + \frac{2}{n(n-1)} \sum_{a = 1}^n \sum_{a \neq b} \I_{\{L(C_a) = k\}} [-\I_{\{L(P_b) \leq d\}} \\ 
&\qquad + \sum_{j=1}^{k-1} (k-j)(\I_{\{L(P_b) = d-j+1\}} + \I_{\{L(P_b) = j\}})], 
\eal

Observe that we may write the number of $k$-cycles as a sum of indicators as $W_k = \frac{1}{k} \sum_{a=1}^n \I_{\{L(C_a) = k\}}$. 
To see this, note that every member of a cycle of length $k$ contributes an indicator variable of value $1$ to the sum, so that each $k$-cycle is counted $k$ times. 
Thus dividing the sum by $k$ yields the correct number of $k$-cycles. 

Using this representation of $W_k$ along with $c_k = \frac{n}{6k}$ and the triangle inequality gives 
\bal
&E|W_k - c_k P(B_k)| \\
&\leq \frac{1}{3k(n-1)} \sum_{a = 1}^n \sum_{b \neq a} E(\I_{\{L(C_a) = k\}} \I_{\{C_a \neq C_b\}} [\I_{\{L(C_b) < k\}} \\
&\qquad + \I_{\{L(C_b) < k\}}\I_{\{L(C_b) \leq d - k\}} + \I_{\{L(C_b) \leq d\}}]) \\ 
&\quad + \frac{k-1}{6k(n-1)} \sum_{a=1}^n E(\I_{\{L(C_a) = k\}}) + \frac{1}{3k(n-1)} \sum_{a = 1}^n \sum_{a \neq b} E(\I_{\{L(C_a) = k\}} [\I_{\{L(P_b) \leq d\}} \\ 
&\qquad + \sum_{j=1}^{k-1} (k-j)(\I_{\{L(P_b) = d-j+1\}} + \I_{\{L(P_b) = j\}})])
\eal

We bound the expected values in the summands. The computations are similar to those in the proof of Lemma \ref{SteinAkUB} so we omit some technical details. 
By the union bound and Lemma \ref{cycleprobUB}, the first summand is
\bal
&E(\I_{\{L(C_a) = k\}} \I_{\{C_a \neq C_b\}} [\I_{\{L(C_b) < k\}} + \I_{\{L(C_b) < k\}}\I_{\{L(C_b) \leq d - k\}} + \I_{\{L(C_b) \leq d\}}]) \\ 
&\quad \leq P(L(C_a) = k)[P(L(C_b) < k \mid L(C_a) = k, C_a \neq C_b) \\ 
&\qquad + P(L(C_b) < k, L(C_b) \leq d - k \mid L(C_a) = k, C_a \neq C_b) \\
&\qquad + P(L(C_b) \leq d \mid L(C_a) = k, C_a \neq C_b)] \\
&\quad \leq \frac{k+1}{n+1}\left( \frac{2k(k-1) + (d-k)(d-k+1) + d(d+1)}{n-k+1} \right) \\
&\quad \leq \frac{2d^2k + 2d^2 - 2dk^2 + 2d + 3k^3 - 3k}{(n+1)(n-k+1)}.
\eal 
By Lemma \ref{cycleprobUB}, the second summand is
\bal
E(\I_{\{L(C_a) = k\}}) = P(L(C_a) = k) \leq \frac{k+1}{n+1}.
\eal
Finally, the third summand is 
\bal
&E(\I_{\{L(C_a) = k\}} [\I_{\{L(P_b) \leq d\}} + \sum_{j=1}^{k-1} (k-j)(\I_{\{L(P_b) = d-j+1\}} + \I_{\{L(P_b) = j\}})]) \\
&\leq \frac{k+1}{n+1}\left( \frac{d(d+1)}{n-k+1} + \sum_{j=1}^{k-1} (k-j) \left( \frac{d-j+2 + j + 1}{n-k+1} \right) \right) \\
&\leq \frac{d^2k + d^2 + dk^3 + d + 3k^3 - 3k}{(n+1)(n-k+1)}.
\eal

Combining the above gives us the upper bound 
\bal
&E|W_k - c_k P(B_k)| \\
&\leq \frac{1}{3k(n-1)} \sum_{a = 1}^n \sum_{b \neq a} \frac{2d^2k + 2d^2 - 2dk^2 + 2d + 3k^3 - 3k}{(n+1)(n-k+1)} \\
&\quad + \frac{k-1}{6k(n-1)} \sum_{a=1}^n \frac{k+1}{n+1} \\
&\quad + \frac{1}{3k(n-1)} \sum_{a = 1}^n \sum_{b \neq a} \frac{d^2k + d^2 + dk^3 + d + 3k^3 - 3k}{(n+1)(n-k+1)} \\
&\leq \frac{2d^2k + 2d^2 - 2dk^2 + 2d + 3k^3 - 3k}{k(n-k+1)} + \frac{k+1}{n-1} \\
&\quad + \frac{d^2k + d^2 + dk^3 + d + 3k^3 - 3k}{k(n-k+1)} \\
&= \frac{k+1}{n-1} + \frac{dk^3 + 6k^3 - 2dk^2 + 3d^2k - 6k + 3d^2 + 3d}{k(n-k+1)}. \qedhere
\eal
\end{proof}

Using the two lemmas above with the multivariate version of Stein's method via exchangeable pairs, we can now prove our Poisson limit theorem for cycle counts. 

\begin{proof}[Proof of Theorem \ref{cyclesPoissonlimit}]
By Theorem \ref{multivariateStein} and Lemmas \ref{SteinAkUB} and \ref{SteinBkUB},
\bal
&d_{TV}(W, Z) \leq \sum_{k=1}^d (E|\lambda_k - c_k P(A_k)| + E|W_k - c_k P(B_k)| ) \\
&\leq \sum_{k=1}^d \left(\frac{2d^2 + 2dk + 2d + 8k^2 + 4k + 1}{k(n-1)} + \frac{k^2 + 6k}{n-k+1} + O(n^{-1})\right) \\
&\qquad + \sum_{k=1}^d \left(\frac{k+1}{n-1} + \frac{dk^3 + 6k^3 - 2dk^2 + 3d^2k - 6k + 3d^2 + 3d}{k(n-k+1)}\right) \\ 
&\leq \frac{1}{n-d} \sum_{k=1}^d \left(\frac{dk^3 + 7k^3 - dk^2 + 15k^2 + 3d^2k + 2dk - k + 5d^2 + 5d + 1}{k} \right) + O(d/n) \\
&= \frac{1}{n-d} \left( (5d^2 + 5d + 1)H_d + \frac{d^4 + 16d^3 + 38d^2 + 23d}{3} \right) + O(d/n), 
\eal
where $H_d$ is the $d$th harmonic number. Using the fact that $H_d \leq \log d + 1$ and simplifying, we obtain an upper bound of
\bal
d_{TV}(W, Z) &\leq \frac{d^4 + 6d^3 + 18d^2 + 13d + 1 + (5d^2 + 5d + 1) \log d}{n-d} + O(d/n) \\
&= O\left( \frac{d^4}{n-d} \right). 
\eal

For $d = o(n^{1/4})$, we have that $d_{TV}(W, Z) \to 0$ as $n \to \infty$. Let $Y = (Y_1,\ldots, Y_d)$, where $\{Y_k\}$ are Poisson random variables with rate $\frac{1}{k}$. 
Since $\lambda_k \to \frac{1}{k}$ as $n \to \infty$ by Theorem \ref{parkingcyclesmean}, we have that $d_{TV}(Z, Y) \to 0$ as $n \to \infty$. By the triangle inequality, $d_{TV}(W, Y) \to 0$ as $n \to \infty$. 
It follows that for all fixed $d$, $(C_1, \ldots, C_d) \xrightarrow{D} (Y_1, \ldots, Y_d)$ as $n \to \infty$. Therefore 
\bal
(C_1, C_2, \ldots) \xrightarrow{D} (Y_1, Y_2, \ldots)
\eal
as $n \to \infty$. 
\end{proof}

We remark that if $d$ is a fixed constant, the total variation distance upper bound is $O(n^{-1})$, which is the optimal convergence rate for Poisson approximation. 


\section{Final Remarks} \label{FinalRemarks}

\subsection{} Parking functions are a subset of a more general class of functions, $\Fc_n$, called {\em random mappings}, which are functions $f : [n] \to [n]$ from the set $[n]$ to itself. 
Random mappings are extensively used in computer science and computational mathematics, for example in random number generators, cycle detection, and integer factorization. 

There is an extensive literature on the probabilistic properties of random mappings and we only cite a few here. 
Harris \cite{Har60} initiated the classical theory of random mappings and studied various probability distributions related to random mappings. 
In \cite{Han89}, Hansen proved a functional central limit theorem for the component structure of the graph representation of random mappings. 
Subsequently, Flajolet and Odlyzko \cite{FO90} studied various statistics on random mappings and used analytic combinatorics to prove limit theorems on these statistics. 
In \cite{AP94}, Aldous and Pitman found connections between features on random mappings and features on Brownian bridge. A parallel probabilistic study on parking functions is fully warranted. One direction is to study various statistics on random parking functions and find their limiting distributions. 

\subsection{} It would be interesting to see if there is a generating function approach to prove the Poisson limit theorem for cycle counts. 
Generating functions combined with analytic combinatorics and singularity analysis have been widely used to compute moments for statistics in various random combinatorial structures, 
as well as in proving limit theorems. 
In particular, it would be interesting to place parking functions into the theoretical framework of logarithmic combinatorial assemblies, introduced by Arratia, Stark, and Tavare in \cite{AST95}. 
Some examples of assemblies include permutations, mappings, and set partitions. 

\subsection{} In \cite{Yin21I} and \cite{Yin21II}, Yin initiated the probabilistic study of $(m,n)$-parking functions and $\bm{u}$-parking functions, respectively, and in particular, obtained explicit formulas for their parking completions. 
It should be tractable to follow our approach and use Stein's method via exchangeable pairs to obtain limit theorems, with convergence rates, for the distribution of cycle counts in these more general models.

\subsection{} A generating function approach was used in \cite{SL66} by Shepp and Lloyd to show that the total number of cycles in a uniformly random permutation is asymptotically normal with mean and variance $\log n$. 
Similarly, Flajolet and Odlyzko \cite{FO90} used generating functions to show that the total number of cycles in a uniformly random mapping is asymptotically normal with mean and variance $\frac{1}{2} \log n$. 
Note that asymptotically, random mappings have about half as many total cycles as random permutations. 

Let $K_n(\pi) = \sum_{k=1}^n C_k(\pi)$ be the total number of cycles of a uniformly random parking function $\pi \in \PF_n$. By Theorem \ref{parkingcyclesmean},  
\bal
E(K_n(\pi)) = \sum_{k=1}^n E(C_k(\pi)) \asymp \sum_{k=1}^n \frac{1}{k} = H_n
\eal
where $H_n$ is the $n$th harmonic number. Note that although $E(C_k(\pi)) \sim \frac{1}{k}$, the implied error terms are not uniform in $k$. Thus we are only able to get that $E(K_n(\pi)) \asymp \log n$. 
A more careful analysis should give the correct constant in front of the log term. 
It should be tractable to use Stein's method for normal approximation to show that $K_n$ is asymptotically normal along with a rate of convergence. 


\section*{Acknowledgments} 
We thank Jason Fulman for suggesting this problem and Peter Kagey for a helpful discussion. We also thank an anonymous referee for many helpful comments and suggestions. 


\Address

\end{document}